\renewcommand*\env@matrix[1][*\c@MaxMatrixCols c]{%
  \hskip -\arraycolsep
  \let\@ifnextchar\new@ifnextchar
  \array{#1}}
\long\def\eatit#1{}
\newtheorem{thm}{Theorem}[section]
\newtheorem{prop}[thm]{Proposition}
\newtheorem{lem}[thm]{Lemma}
\theoremstyle{definition}
\newtheorem{ques}[thm]{Question}
\newtheorem{rem}[thm]{Remark}
\newcommand{\p}{p}
\newcommand{\pr}[1]{{{\mathbb P}^{#1}}}
\newcommand{\FF}{\mathbb{F}}
\newcommand{\PP}{\mathbb{P}}
\newcommand{\ZZ}{\mathbb{Z}}
\newcommand{\field}{K}
\newcommand{\lra}{\longrightarrow}
\newcommand{\rees}{\mathcal{R}}
\newcommand{\depth}{\rm \mathop{depth}}
\newcommand{\Hom}{\rm Hom}
\newcommand{\Ext}{\rm Ext}
\newcommand{\Sym}{\rm Sym}
\renewcommand{\Im}{\rm Im}
\renewcommand{\char}{\rm char}
\newcommand{\Span}{\rm Span}
\newcommand{\X}{\mathbf{ X}}
\renewcommand{\P}{\mathbb{P}}
\title [A criterion for the failure of containments]
{A homological criterion for the containment between symbolic and ordinary
powers of \\ some ideals of points in $\pr{2}$}
\author{Alexandra Seceleanu}
\begin{document}

\begin{abstract}We establish a criterion for the (failure of) the containment $I^{(m)}\subseteq I^r$ for 3-generated ideals $I$ defining reduced sets of points in $\pr{2}$. Our criterion arises from studying the minimal free resolutions of the powers of $I$, specifically the minimal free resolutions for  $I^m$ and $I^r$. We apply this criterion to two point configurations that have recently arisen as counterexamples to a question of B. Harbourne and C. Huneke: the family of Fermat configurations  and the Klein configuration.
\end{abstract}
\maketitle

\section{Comparing symbolic and ordinary powers of ideals}

Let $\field$ be a field. Every point $p$ in the complex projective $N$-space $\P^N$ over $\field$ gives rise to the ideal $I( p )\subset K[x_0,\ldots,x_N]$ generated by the homogeneous polynomials vanishing at $\p$.  A finite set $\X\subset {\bf P}^N$ of points in projective space  corresponds to  the ideal $I(\X)$ generated by all homogeneous polynomials  (forms) vanishing on $\X$. More precisely, if $\X=\{p_1,\ldots,p_n\}$, then $I(\X)=\cap_{i=1}^nI(p_i)$. We call an ideal $I(\X)$ of this form an ideal defining a reduced set of points in $\P^N$. Note that we do not allow irrelevant components, in other words our  ideal defining a reduced set of points are always understood to be  arithmetically Cohen-Macaulay. 

 If  $I(\X)=\cap_{i=1}^nI(p_i)$ is an  ideal defining a reduced set of points in $\P^N$, the $m^{th}$ {symbolic power} $I(\X)^{(m)}$ is given by 
$$I(\X)^{(m)}=\bigcap_{i=1}^nI(p_i)^m.$$ 

Symbolic powers arise as naturally in geometry as ideals specified by higher order vanishing conditions. The ideal $I(\X)^{(m)}$ defined above consists manifestly of  all forms whose order of vanishing is at least $m$ at each point of $\X$, if we define the order of vanishing of a form $F$ at a point $p_i$ to be the exponent of the largest ordinary power of $I(p_i)$ containing $F$. In fact, symbolic powers can be more generally defined for arbitrary  ideals $I$ of a ring $R$ by setting
$$I^{(m)}=\bigcap_{P\in\rm{Min}(I)}\left(I^mR_P\cap R\right),$$ although the special case of ideals of points presented first will suffice for our purposes.  With this more general definition, it is still true, for prime ideals defining varieties in projective space over an algebraically closed field, that the $m$-th symbolic power consists of  all forms whose order of vanishing is at least $m$ at each point of the variety, by a classical theorem of Zariski and Nagata \cite[Theorem 3.14]{refEis}. 

While the definitions above indicate a close relationship between symbolic powers and ordinary powers of ideals, it is natural to ask: how do the various symbolic powers and ordinary powers of the same ideal compare with respect to containment? This is a difficult and broad question that can also be asked for arbitrary homogeneous ideals. A celebrated answer to this question  was given in \cite{ELS,HoHu} using multiplier ideal techniques and positive characteristic methods respectively:
\begin{thm}[Ein-Lazarsfeld-Smith \cite{ELS}, Hochster-Huneke \cite{HoHu}]
The containment  ${ I^{(rN)}\subseteq I^r}$ holds for all homogeneous ideals $I\subseteq\field[x_0,\ldots,x_N]$ and for all $r\geq 1$.
\end{thm}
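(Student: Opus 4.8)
The plan is to prove the equivalent uniform containment $I^{(hr)}\subseteq I^r$, where $h$ denotes the maximal height of an associated prime of $I$; since the associated primes of any homogeneous ideal that is not primary to $\mathfrak{m}=(x_0,\ldots,x_N)$ all have height $\le N$, this yields $I^{(rN)}\subseteq I^{(hr)}\subseteq I^r$ (the excluded $\mathfrak m$-primary case being immediate, as then $I^{(m)}=I^m$). I would first reduce to the case $\operatorname{char}\field=p>0$ by a standard spreading-out argument: descend $I$ to a finitely generated $\ZZ$-subalgebra, reduce modulo a dense set of primes, and use that both the formation of symbolic powers and the containment in question are preserved under generic reduction. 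Having fixed a regular ring $R$ of characteristic $p$, I would then exploit the Frobenius endomorphism.

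The technical heart is a Frobenius comparison lemma: for every $q=p^e$ and every $n\ge 1$,
$$I^{(hnq)}\subseteq \left(I^{(n)}\right)^{[q]},$$
where $(-)^{[q]}$ denotes the $q$-th Frobenius power. I would prove this by localizing at the associated primes of the right-hand side. Because Frobenius is flat over a regular ring, $\operatorname{Ass}\!\big(R/(I^{(n)})^{[q]}\big)=\operatorname{Ass}(R/I^{(n)})$, and symbolic powers have no embedded primes, so it suffices to check the containment after localizing at each minimal prime $P$ of $I$, where $R_P$ is regular local of dimension $\operatorname{ht}P\le h$. There the left side becomes $(PR_P)^{hnq}$ and the right side becomes $((PR_P)^n)^{[q]}$, and the inclusion reduces to a pigeonhole statement about monomials in a regular system of parameters $x_1,\ldots,x_d$ with $d\le h$: any monomial $x^\beta$ of degree $hnq$ factors as $x^{q\alpha}$ times another monomial with $|\alpha|=n$, since $\sum_j\lfloor\beta_j/q\rfloor>h(n-1)\ge n-1$.

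With the lemma in hand I would conclude by tight closure. Given $z\in I^{(hn)}$, multiplicativity of symbolic powers gives $z^q\in\left(I^{(hn)}\right)^q\subseteq I^{(hnq)}\subseteq\left(I^{(n)}\right)^{[q]}$ for all $q$. Choosing a fixed nonzero multiplier $c$ that clears the gap between $I^{(n)}$ and $I^n$ uniformly in $q$ — concretely, an element lying in the relevant colon ideals for all Frobenius powers, whose existence is guaranteed by a uniform colon-capturing (Artin–Rees type) statement — one obtains $c\,z^q\in (I^n)^{[q]}$ for all $q\gg 0$, i.e. $z\in(I^n)^{*}$. Since every ideal of a regular ring is tightly closed, $(I^n)^{*}=I^n$, and hence $z\in I^n$.

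The main obstacle is precisely this last descent from the symbolic power $I^{(n)}$ to the ordinary power $I^n$: the naive attempt to prove $I^{(hnq)}\subseteq (I^n)^{[q]}$ directly fails because ordinary powers $I^n$ acquire embedded primes, at which the pigeonhole argument has no purchase. Producing a single multiplier $c$, independent of $q$, that neutralizes these embedded primes for all $n$ simultaneously is the delicate point, and it is what forces the use of tight closure and the regularity hypothesis rather than a purely combinatorial argument. In characteristic zero one runs the Ein–Lazarsfeld–Smith argument instead: working with the asymptotic multiplier ideals $\mathcal J(\mathfrak a_\bullet^{\lambda})$ of the graded system $\mathfrak a_\bullet=\{I^{(k)}\}$, one combines the containment $I^{(hm)}\subseteq \mathcal J(\mathfrak a_\bullet^{hm})$ with the subadditivity theorem $\mathcal J(\mathfrak a_\bullet^{hm})\subseteq \mathcal J(\mathfrak a_\bullet^{h})^{m}$ and the codimension bound $\mathcal J(\mathfrak a_\bullet^{h})\subseteq I$; here subadditivity plays the role that tight closure plays above and is the corresponding main obstacle.
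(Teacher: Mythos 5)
First, a point of order: the paper does not prove this theorem --- it quotes it from \cite{ELS} and \cite{HoHu} --- so your sketch is being measured against the published arguments it reconstructs. Your characteristic-$p$ skeleton (Frobenius powers, flatness of Frobenius, pigeonhole at minimal primes, tight closure in a regular ring) is the right one, and your comparison lemma $I^{(hnq)}\subseteq (I^{(n)})^{[q]}$ is indeed true for radical $I$, by exactly the localization argument you give. The genuine gap is the final step. You need a fixed nonzero $c$ with $c\,(I^{(n)})^{[q]}\subseteq (I^n)^{[q]}$ for all $q$ (or at least $c\,z^q\in (I^n)^{[q]}$ for your given $z$), and you propose to extract it from a uniform Artin--Rees/colon-capturing statement. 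No such $c$ can exist in general: if $c\neq 0$ satisfies $c\,(I^{(n)})^{[q]}\subseteq (I^n)^{[q]}$ for all $q$, then every $a\in I^{(n)}$ satisfies $c\,a^q\in (I^n)^{[q]}$ for all $q$, hence $a\in (I^n)^{*}$; since every ideal of a regular ring is tightly closed, this forces $I^{(n)}\subseteq I^n$. That containment is false in general --- e.g.\ $xyz\in I^{(2)}\setminus I^2$ for $I=(xy,xz,yz)$, and failures of such containments are the very subject of this paper. If instead you only demand $c\,z^q\in(I^n)^{[q]}$ for your particular $z$, then the existence of $c$ is, by the same reasoning, equivalent to the desired conclusion $z\in I^n$, so assuming it is circular. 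You correctly flagged this descent as ``the delicate point,'' but the tool you invoke to resolve it cannot exist.

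The proof in \cite{HoHu} never needs to pass from $(I^{(n)})^{[q]}$ to $(I^n)^{[q]}$; the trick is division with remainder in the exponent. For radical $I$ of big height $h$ and $f\in I^{(hn)}$, fix $q=p^e$ and write $q=an+r$ with $0\le r<n$. Then
$$f^a\,I^{(h(n-1))}\subseteq I^{(h(an+n-1))}\subseteq I^{(hq)}\subseteq I^{[q]},$$
where the last containment is your pigeonhole applied to $I$ itself (flatness of Frobenius gives $\operatorname{Ass}(R/I^{[q]})=\operatorname{Min}(I)$, so no embedded primes intervene). Raising to the $n$-th power and multiplying by $f^r$ yields $f^q\,\bigl(I^{(h(n-1))}\bigr)^n\subseteq (I^{[q]})^n=(I^n)^{[q]}$ for every $q$ simultaneously; hence any fixed nonzero $c\in \bigl(I^{(h(n-1))}\bigr)^n$ witnesses $f\in (I^n)^{*}=I^n$. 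The multiplier lives in a fixed symbolic power; it does not compare $I^{(n)}$ with $I^n$. Two smaller corrections are also needed to match the statement as given: $h$ must be the largest height of a \emph{minimal} prime of $I$ (with ``associated,'' an ideal with an embedded component at the irrelevant maximal ideal is not primary to it yet has $h=N+1$, so your first inclusion $I^{(rN)}\subseteq I^{(hr)}$ points the wrong way), which is also what the paper's definition of symbolic powers via minimal primes requires; and your identification $I^{(hnq)}R_P=(PR_P)^{hnq}$ presupposes $IR_P=PR_P$, i.e.\ $I$ generically reduced, whereas the theorem is asserted for all homogeneous ideals --- the non-reduced case requires additional input (reductions/analytic spread arguments, as carried out in \cite{HoHu}).
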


What currently remains open is whether the containment described before is best possible. In an attempt to strengthen this containment, one may try to make the symbolic side larger by decreasing the symbolic exponent. As a specific  case of this approach for $N=2$ and $r=2$, C. Huneke asked: 

\begin{ques}
Does  ${I^{(3)}\subseteq I^2}$  hold  for any radical ideal $I$ defining a finite set of points of $\pr2$?
\end{ques}

 An extended version of the above question  was proposed  by B. Harbourne, who asked whether $I^{(Nr-N+1)}\subseteq I^r$ would hold for all $r\geq 1$ and all $N\geq 1$ for ideals $I\subseteq \field[x_0, \ldots x_N]$ defining finite sets of reduced points \cite[Conjecture 4.1.1]{HaHu}. While the same questions can be asked more generally for arbitrary homogeneous ideals, the case of points was proposed as a fertile testing ground that benefits from being supported by geometric intuition. To give added validity to the above questions, it is worth noting that both have affirmative answer when $I$ is the ideal defining a general set of points in $\pr 2$ (\cite[Remark 4.3]{refBH}).

However, through recent developments, it is now known that the containment ${I^{(3)}\subseteq I^2}$ does not always hold. We now give an account of the successive discovery of counterexamples that have arisen in relation to this containment. Understanding these counterexamples constitutes the main motivation of this paper. The first proof that the containment ${I^{(3)}\subseteq I^2}$ can fail, appeared in the paper \cite{refDST}. The  counterexample given therein is a configuration of 12 points in the complex projective plane,  defined by the ideal $$I=(x(y^3-z^3), y(z^3-x^3), z(x^3-y^3)).$$
These points arise as the singular locus of an arrangement of 9 lines. The entire incidence structure of the 12 points and 9 lines is projectively dual to the classical Hesse configuration given by the 9 flexes of a general plane cubic together with the 12 lines passing through pairs of these flexes. A characteristic 3 analogue of the counterexample of \cite{refDST} was later given in \cite{refBCH}. More precisely, it is proven therein that removing any single point from among the 13 points of the finite projective plane $\PP_{\mathbb{F}_3}^2$ together with all the lines passing through the removed point yields on the remaining 12 points the same incidence structure exhibited by the dual Hesse configuration of \cite{refDST}. It is worth noting, however, that although they share  the same combinatorial data and the property that $I^{(3)}\not\subseteq I^2$, the dual Hesse configuration of \cite{refDST} and the characteristic 3 counterexample of \cite{refBCH} behave very differently when viewed from the perspective of  algebraic-geometric  invariants associated to them (see \cite{refDHNSST} for a detailed account of the differences and computations of the  resurgence for these counterexamples).

  Generalizing the work of \cite{refDST}, an  infinite family  of counterexamples was given in \cite{refHS}: over any field $K$ that contains $j$ distinct $n$-th roots of 1, the ideal $$I=(x(y^n-z^n), y(z^n-x^n), z(x^n-y^n)$$ defines a set of $n^2+3$ points that arises as the singular locus of an arrangement of $3n+3$ lines. Following \cite{refDHNSST}, which uses terminology introduced by Urzu\'a \cite{refUz}, we refer to these  configurations  of $n^2+3$ points as Fermat configurations. It is proven in \cite[Proposition 2.1]{refHS} that, if $I$ is the defining ideal of a Fermat configuration of points, then ${I^{(3)}\not\subseteq I^2}$. 

Recently, the authors of \cite{Bea} have found two new configurations of planar points that fail to satisfy the containment ${I^{(3)}\subseteq I^2}$. These are both configurations of points studied classically, one by Klein \cite{refKl} in conjunction with a certain quartic curve and the other by Wiman \cite{refWi} in relation to the group of collineations of the projective plane. In the following, we refer to each of these configurations using the name of their respective discoverer. The Klein configuration consists of 49 points which form the singular locus of an arrangement of  21 lines.  The Wiman configuration  consists of 201 points that arise as the singular locus of an arrangement of 45 lines. Explicit coordinates of the points in each of these configurations (over $\mathbb{C}$) are given in \cite{Bea}, where it is also noted that the failure of the containment ${I^{(3)}\not\subseteq I^2}$ was verified  computationally for both configurations by the authors.

Motivated by these counterexamples,  we propose a homological approach meant to verify the failures of containment mentioned above from  a theoretical perspective. Our end goal is to produce a criterion for the failure of the containment $I^{(3)}\subseteq I^2$ that would apply to the aforementioned counterexamples. We hope that such an approach will give some insight into what makes these configurations special from the point of view of the relation between their symbolic and ordinary powers. 
It is clear, by definition, that the Fermat configurations of points mentioned above are defined by 3-generated ideals. We prove  in section \ref{sKlein} that the Klein configuration is also cut out by a 3-generated ideal with three minimal generators of degree 8. Motivated by this, we restrict our attention to 3-generated ideals of height 2, with minimal generators of the same degree: $$I=(f,g,h)\subset \field[x,y,z].$$  

Our paper is structured as follows: we determine the minimal free resolutions of the powers of $I$, specifically the minimal free resolutions for $I^2$ and $I^3$ in section \ref{sect2}. We establish a homological criterion for   containments of the from $I^{(m)}\subseteq I^r$  in section Proposition \ref{inj}. When applied for $m=3$ and $r=2$, this criterion can be expressed concretely in terms of a single map in the aforementioned  minimal free resolution of $I^3$. Furthermore, the map in question is represented by a certain matrix easily described in terms of the minimal syzygies on the generators of $I$ by Theorem \ref{main}.  We apply our criterion to the Fermat point configurations in section \ref{sFermat}, giving a new theoretical proof that  $I^{(3)}\not\subseteq I^2$. We further apply our criterion to the classical Klein configuration over the complex numbers and some analogous configurations defined over fields of prime characteristic in section \ref{sKlein}, giving the first theoretical proof that  $I^{(3)}\not\subseteq I^2$ for the defining ideal $I$ of the Klein configuration.

 \section{Resolutions of powers of uniformly 3-generated ideals of points}\label{sect2}
 
In the rest of the note, we use the term {\em strict almost complete intersection} to mean an ideal of height $h$ that has a minimal set of generators of cardinality $h+1$. An  ideal defining a reduced sets of points in $\pr{2}$, is a strict almost complete intersections if it is  3-generated. 
We start by describing  the structure of the minimal free resolutions of the second and third ordinary powers of ideals of planar points that are  strict almost complete intersections with minimal generators of the same degree. 

Let $I=(f,g,h)\subset R=K[x,y,z]$ be a homogeneous ideal with minimal generators of same degree $d$.  A useful tool for obtaining free resolution for  powers of an ideal $I$, introduced in \cite{refCHT}, \cite{refKo}, \cite{refTr}, is to consider the Rees algebra of $I$, which is defined as $\rees(I)=\oplus_{i\geq0} I^it^i$.   From the universal property of symmetric algebras, one deduces the existence of a canonical  surjective graded ring homomorphism $\Sym(I)\stackrel{}{\longrightarrow} \rees(I)$, where $\Sym(I)$ is the symmetric algebra of $I$. It is often convenient to have explicit presentation for the symmetric and Rees algebras. 
Let $S=R[T_1,T_2,T_3]$ denote a bigraded polynomial ring, where the variables of $R$ have degree $(1,0)$ and $\deg(T_i)=(d,1)$.  We write $S_{(a,b)}$ for the bidegree $(a,b)$ component of $S$ and we set $S_{(*,b)}=\bigoplus_{a\geq0}S_{(a,b)}$. Note that $S_{(*,b)}$ is a free $R$-module of rank $\binom{b+2}{2}$.

Both the symmetric algebra $\Sym(I)$ and Rees algebra $\rees(I)$ can be expressed  as  quotients of the  ring $S$. Defining a bihomogeneous surjection
 $S\to\Sym(I)$ by mapping $T_i \mapsto f_it$ induces isomorphisms
$$\Sym(I) \simeq S/L_1, \mbox{ where } L_1=\{\sum_{i=1}^n b_iT_i \ | \ \sum_{i=1}^n b_if_i=0\} \mbox{and} $$
$$ \rees(I) \simeq S/L, \mbox{ where } L=\{F(T_1,T_2,\ldots,T_n) \ | \ F(f_1,f_2,\ldots,f_n)=0\}.$$ 
 If the map $\Sym(I)\stackrel{}{\longrightarrow} \rees(I)$ gives an isomorphism between $\Sym(I)$ and $\rees(I)$, then $I$ is said to have  {\em linear type}.  Equivalently, $I$ has linear type if and only if $L=L_1$, that is the defining ideal of the Rees algebra $L$ is generated in bidegree $(*,1)$. This is the class of ideals where the Rees algebra is easiest to understand. 

The following result, essentially proved in \cite{refNS}, shows that $I$ has linear type in the case of interest for this paper, that is in the case of 3-generated ideals defining points in $\pr{2}$:
\begin{lem}\label{lintype}
Let $I$ be a strict almost complete intersection ideal defining a reduced set of points in $\pr{N}$. Then $I$ is an ideal of linear type.
\end{lem}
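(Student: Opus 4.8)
The plan is to verify that $I$ satisfies the two standing hypotheses of the linear-type criterion of Herzog--Simis--Vasconcelos---that the ambient ring is Cohen--Macaulay, that $I$ satisfies the condition $G_\infty$, and that $I$ has sliding depth---and then to invoke that criterion. Throughout, write $R=K[x_0,\ldots,x_N]$, so $\dim R=N+1$, let $\mathfrak{m}=(x_0,\ldots,x_N)$ be the homogeneous maximal ideal, and recall that by our standing assumptions $R/I$ is Cohen--Macaulay of dimension $1$, with $\operatorname{ht} I=N$ and minimal number of generators $\mu(I)=N+1$.

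First I would check $G_\infty$, i.e.\ that $\mu(I_P)\le\dim R_P$ for every prime $P\supseteq I$. The primes containing the radical ideal $I$ are exactly the point ideals $I(p_i)$, of height $N$, together with $\mathfrak{m}$, of height $N+1$; indeed any prime properly containing some $I(p_i)$ and containing $I$ must equal $\mathfrak{m}$. At a point $P=I(p_i)$ the reducedness of $\X$ makes $R_P$ a regular local ring with $I_P=PR_P$, so $\mu(I_P)=N=\dim R_P$, while at $\mathfrak{m}$ we have $\mu(I_{\mathfrak{m}})=N+1=\dim R_{\mathfrak{m}}$. Hence $\mu(I_P)\le\dim R_P$ for all $P\in V(I)$, which is precisely $G_\infty$.

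The substance of the argument is the sliding depth (indeed strong Cohen--Macaulay) condition, which I would obtain from the almost complete intersection structure via linkage. Since $I$ is a strict almost complete intersection, we may choose a complete intersection $J=(a_1,\ldots,a_N)\subseteq I$ generated by a homogeneous regular sequence, with $\operatorname{ht} J=N$ and $I=J+(f)$ for a single homogeneous element $f$; the $N+1$ elements $a_1,\ldots,a_N,f$ then form a minimal generating set of $I$, and the strong Cohen--Macaulay property is independent of this choice. Because $\operatorname{grade} I=N$ and $I$ has $N+1$ generators, the Koszul complex on $a_1,\ldots,a_N,f$ has $H_i=0$ for $i\ge 2$, and $H_0=R/I$ is Cohen--Macaulay of dimension $1$. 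For the remaining module I would use the mapping-cone long exact sequence obtained by viewing this Koszul complex as the cone of multiplication by $f$ on the acyclic Koszul complex of the regular sequence $a_1,\ldots,a_N$; since the latter has homology only $R/J$ in degree $0$, it collapses to
\[
H_1(a_1,\ldots,a_N,f;R)\;\cong\;(J:_R f)/J=(J:I)/J.
\]
As $R/J$ is a one-dimensional Gorenstein ring, Peskine--Szpiro linkage identifies $(J:I)/J$ with the canonical module $\omega_{R/I}$, which is Cohen--Macaulay of dimension $1$. Thus every Koszul homology module of a minimal generating set of $I$ is Cohen--Macaulay, so $I$ is strongly Cohen--Macaulay and in particular has sliding depth.

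With $R$ Cohen--Macaulay and both $G_\infty$ and sliding depth in hand, the Herzog--Simis--Vasconcelos theorem gives that $I$ is of linear type; alternatively one may note that these hypotheses force $a_1,\ldots,a_N,f$ to be a $d$-sequence and apply Huneke's $d$-sequence criterion. I expect the main obstacle to be the depth computation for $H_1$: the entire argument turns on this first Koszul homology having no $\mathfrak{m}$-primary (finite-length, embedded) component, and it is exactly the almost complete intersection shape together with the Cohen--Macaulayness of the reduced points that produces the isomorphism $H_1\cong\omega_{R/I}$ and hence the required positive depth. The secondary technical points are the genericity needed to split off the complete intersection $J$ and the bookkeeping of graded shifts in the linkage isomorphism.
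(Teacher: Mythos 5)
Your route is genuinely different from the paper's. The paper's proof is a two-line reduction: it quotes \cite[Lemma 3.1]{refNS}, which says that a strict almost complete intersection $I$ with $R/I$ equidimensional and $\depth R/I\geq \dim R/I-1$ is of linear type if and only if it is locally a complete intersection at its minimal primes, and then observes that at each point $p_i$ the localization of $I$ is generated by $N$ linear forms. Your $G_\infty$ verification is essentially that same local computation (for a strict almost complete intersection, $G_\infty$ amounts to local complete intersection at the minimal primes, since at any non-minimal prime $P\supseteq I$ one automatically has $\mu(I_P)\leq N+1\leq \dim R_P$); what you add is a from-scratch proof of the homological input via linkage, followed by the Herzog--Simis--Vasconcelos criterion \cite{refHSV}. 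The linkage computation itself is sound: $H_i=0$ for $i\geq 2$ by depth sensitivity, the mapping cone gives $H_1\cong (J:_R f)/J=(J:I)/J$, and duality over the Gorenstein ring $R/J$ identifies this with $\omega_{R/I}$, which is Cohen--Macaulay of dimension one. The genuine gap is the step you dismiss as a secondary technical point: the claim that one may choose a homogeneous regular sequence $a_1,\ldots,a_N\in I$ so that $a_1,\ldots,a_N,f$ is a \emph{minimal} generating set of $I$. The lemma is stated over an arbitrary field; over an infinite field a generic choice does work (pairwise common factors would force a common factor), but you never give that argument, and over a finite field the choice can be impossible.

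Concrete failure: take $\field=\FF_2$ and let $I\subset \FF_2[x,y,z]$ be the ideal of the seven $\FF_2$-rational points of $\pr{2}$. For each point $q$ let $w_q$ be the product of the three $\FF_2$-lines through $q$; one checks (in characteristic 2) that $w_{q_1}+w_{q_2}=w_{q_3}$ whenever $q_1,q_2,q_3$ are collinear, so the seven forms $w_q$ are exactly the nonzero elements of the $3$-dimensional space spanned by $w_1=yz(y+z)$, $w_2=xz(x+z)$, $w_3=xy(x+y)$. These cubics satisfy $xw_1+yw_2+zw_3=0$, a syzygy given by a regular sequence, so by \cite{refEH}, \cite{refTo} the ring $R/I$ is Cohen--Macaulay with resolution $0\to R(-4)\oplus R(-5)\to R(-3)^3\to I\to 0$ and multiplicity $7$; hence $I=(w_1,w_2,w_3)$ is precisely the radical ideal of the seven points and is a strict almost complete intersection generated in degree $3$. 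Any two linearly independent elements of $I_3$ are of the form $w_{q_1},w_{q_2}$ and share the linear form defining the line through $q_1$ and $q_2$; since every minimal generating set of $I$ is a basis of $I_3$, no minimal generating set contains a regular sequence of length two, and your $J$ does not exist. The gap is repairable because strong Cohen--Macaulayness does not require minimality: take any maximal homogeneous regular sequence $a_1,\ldots,a_N\in I$ (graded prime avoidance, allowing powers), form the non-minimal generating set $a_1,\ldots,a_N,f_1,\ldots,f_{N+1}$, use the quasi-isomorphism $K_\bullet(a_1,\ldots,a_N)\simeq R/J$ to compute its top Koszul homology as $(J:I)/J\cong\omega_{R/I}$, and then use the direct-sum behaviour of Koszul homology under redundant generators together with $H_i=0$ for $i>1$ on a minimal generating set to conclude that this top homology is the $H_1$ you need. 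A second, minor, inaccuracy: the primes containing $I$ are not only the $I(p_i)$ and $\mathfrak{m}$ (there are non-homogeneous maximal ideals corresponding to points of the affine cone away from the vertex); $G_\infty$ survives either by the standard reduction to homogeneous primes via $\mu(I_P)\leq\mu(I_{P^*})$ and $\h P=\h P^*+1$, or by checking directly that $I$ localizes to a complete intersection at those primes as well.
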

\begin{proof}
It is proven in \cite[Lemma 3.1]{refNS} that for  strict almost complete intersection ideals $I$ such that $R/I$ is equidimensional and satisfies $\depth R/I\geq \dim R/I-1$ (both of these conditions are clearly satisfied in case $I$ defines a reduced set of points, hence $R/I$ is arithmetically Cohen-Macaulay), the property that $I$ is of linear type is equivalent to the fact that $I$ is locally a complete intersection, that is $I_P$ is a complete intersection for every minimal prime $P\in\text{Ass}(R/I)$. For the class of ideals defining a finite reduced set of points i.e. $I=\cap_{i=1}^n I(p_i)$ as in the introduction,  we have for every point $p_i$ in the set that $I_P=I(p_i)$ is minimally generated by a regular sequence of $N$ linear forms.
\end{proof}

We use this nice property  to give an explicit description for the free resolutions of the square and the cube of a uniformly 3-generated ideals of points in $\pr{2}$.
 
 \begin{lem}\label{CI}
 Let $I$ be a strict almost complete intersection ideal defining a reduced set of points in in $\pr{2}$ and let $A=\left[\begin{matrix}P_1 & P_2 & P_3 \\ Q_1 & Q_2 & Q_3 \end{matrix}\right]^T$ be a presentation matrix for the module of syzygies on $I$  i.e. the Hilbert-Burch matrix of $I$. Then the symmetric and the Rees algebras of $I$ are given as quotients of the polynomial ring $S=R[T_1,T_2,T_3]$ by
 $$\rees(I)\simeq \Sym(I) \simeq S/(P_1T_1+P_2T_2+ P_3T_3, Q_1T_1+Q_2T_2+Q_3T_3).$$ 
 Furthermore, the defining ideal of these algebras, $(P_1T_1+P_2T_2+ P_3T_3, Q_1T_1+Q_2T_2+Q_3T_3)$ is a complete intersection.
 \end{lem}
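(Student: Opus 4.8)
The plan is to prove Lemma \ref{CI} by combining the linear-type property established in Lemma \ref{lintype} with the Hilbert-Burch structure theorem. First I would use Lemma \ref{lintype}: since $I$ is a strict almost complete intersection defining a reduced set of points in $\pr{2}$, it is of linear type, so the natural surjection $\Sym(I)\to\rees(I)$ is an isomorphism and $L=L_1$. This immediately reduces the problem to identifying the defining ideal $L_1$ of the symmetric algebra, which by the presentation $S\to\Sym(I)$, $T_i\mapsto f_it$, is generated by the linear forms $\sum_i b_i T_i$ coming from the $R$-syzygies $\sum_i b_i f_i=0$ on the generators.

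The key structural input is the Hilbert-Burch theorem. Since $R/I$ is arithmetically Cohen-Macaulay of codimension $2$, $I$ has a minimal free resolution of length $2$, and the module of syzygies on $(f,g,h)$ is generated by the columns of the transpose of the Hilbert-Burch matrix $A$. Concretely, the two relations $P_1f+P_2g+P_3h=0$ and $Q_1f+Q_2g+Q_3h=0$ generate the syzygy module, and the maximal minors of $A$ recover $(f,g,h)$ up to a unit. Translating each minimal syzygy into a linear generator of $L_1$ via the correspondence $\sum_i b_i f_i=0 \leftrightarrow \sum_i b_i T_i$, I obtain exactly
\[
L_1=(P_1T_1+P_2T_2+P_3T_3,\ Q_1T_1+Q_2T_2+Q_3T_3),
\]
which yields the displayed presentation for both $\Sym(I)$ and $\rees(I)$.

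For the final assertion that this ideal is a complete intersection, I would argue by a dimension/height count. The ring $S=R[T_1,T_2,T_3]$ has Krull dimension $6$, while $\rees(I)$ has dimension $\dim R+1=4$ because $I$ has height $2$ (equivalently, $R/I$ has dimension $1$, so $\rees(I)$ has dimension one more than $R$). Hence the defining ideal $L$ has height $6-4=2$. Since $L$ is generated by the two elements $P_1T_1+P_2T_2+P_3T_3$ and $Q_1T_1+Q_2T_2+Q_3T_3$, an ideal of height $2$ generated by $2$ elements is automatically a complete intersection. The main point to verify carefully is the dimension count for $\rees(I)$, which rests on the standard fact that for an ideal $I$ of positive height the Rees algebra has dimension $\dim R+1$; combined with the height bound that these two forms cannot generate an ideal of height less than $2$ (as $\Sym(I)=\rees(I)$ is a domain of the correct dimension), this forces the generators to form a regular sequence.

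The step I expect to be the main obstacle is the careful passage from the syzygy data of $I$ to the generators of $L_1$, together with pinning down the dimension of $\rees(I)$ precisely enough to conclude the complete intersection property; once linear type is in hand, both points are essentially bookkeeping, but the complete intersection conclusion genuinely uses that $L_1$ has exactly two generators meeting the height bound, so the dimension computation is where all the geometric hypotheses (height $2$, reducedness, Cohen-Macaulayness) are actually consumed.
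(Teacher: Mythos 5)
Your proposal is correct, and its first half—obtaining the presentation by combining Lemma \ref{lintype} (linear type, so $L=L_1$) with the Hilbert--Burch description of the syzygy module—is exactly the paper's argument. Where you genuinely diverge is the complete intersection step. The paper disposes of it in one line: ``since the two syzygies of $I$ are algebraically independent, the height of this ideal is two.'' You instead run a dimension count: $\dim S=6$, $\dim\rees(I)=\dim R+1=4$ (valid since $I\neq 0$ and $R$ is a domain), $L$ is prime because $\rees(I)\subset R[t]$ is a domain, so catenarity of the polynomial ring $S$ gives height $6-4=2$; then Cohen--Macaulayness of $S$ upgrades ``height $2$, two generators'' to ``generated by a regular sequence.'' Your route is longer but more airtight: algebraic independence by itself does not force height two (in $\field[x,y]$ the elements $x$ and $xy$ are algebraically independent yet generate an ideal of height one), so the paper's phrasing leaves a gap that your Rees-algebra dimension formula closes cleanly. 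The trade-off is that you must invoke the standard facts $\dim\rees(I)=\dim R+1$ and ``height equals grade in a Cohen--Macaulay ring,'' whereas the paper's intended argument (presumably that $F$ and $G$ have no common factor and behave like a regular sequence in the bigraded ring) stays elementary but as written is not a complete justification.
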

 \begin{proof}
By the Hilbert-Burch Theorem  \cite[Theorem 20.15]{refEis},  $I$ is generated by the 2 by 2 minors of some $2\times 3$ matrix whose entries are homogeneous polynomials of the same degree in each of the two columns. Denote this matrix by $A=\left[\begin{matrix}P_1 & P_2 & P_3 \\ Q_1 & Q_2 & Q_3 \end{matrix}\right]^T$.
By definition, the symmetric algebra of $I$ is  given by the quotient in the statement and the isomorphism between the symmetric and the Rees algebra follows from Lemma \ref{lintype}. It remains to show that the ideal $(P_1T_1+P_2T_2+ P_3T_3, Q_1T_1+Q_2T_2+Q_3T_3)$ is a complete intersection. Since the two syzygies of $I$ are algebraically independent, the height of this ideal is two. This yields the desired conclusion that the defining ideal is a complete intersection.
\end{proof}
 
\begin{prop}\label{res}
Let $I$ be strict almost complete intersection ideal with minimal generators of the same degree $d$ defining a reduced set of points in $\pr{2}$. Let $A=\left[\begin{matrix}P_1 & P_2 & P_3 \\ Q_1 & Q_2 & Q_3 \end{matrix}\right]^T$ be a presentation matrix for the module of syzygies on $I$ (i.e. the Hilbert-Burch matrix of $I$). Let $d_0$ and $d_1$ denote the respective degrees of the polynomials in each of the two columns of $A$. Then the minimal free resolution of $I^2$ and $I^3$ are as follows: 

$$0 \lra R(-3d) \stackrel{X}{\lra} R(-2d-d_0)^3 \oplus R(-2d-d_1)^3 \lra R(-2d)^6 \lra I^2 \lra 0$$

$$0 \lra R(-4d)^3 \stackrel{Y}{\lra}  R(-3d-d_0)^6 \oplus R(-3d-d_1)^6\lra R(-3d)^{10} \lra I^3 \lra 0,$$
and the last homomorphisms in the respective resolutions can be described by the matrices $X$ and $Y$ given below: 
$$X= \left[\begin{array}{cccccccccccc} P_1 & P_2 & P_3 & -Q_1, -Q_2, -Q_3    \end{array}\right]^T,$$
$$ Y=\left[\begin{array}{cccccccccccc}P_1 & P_2 & P_3 & 0 & 0 & 0 & -Q_1 & -Q_2 & -Q_3 & 0 & 0 & 0 \\
                                             0 & P_1 & 0 & P_2 & P_3 & 0 & 0 & -Q_1 & 0 & -Q_2 & -Q_3 & 0 \\
                                            0 & 0 & P_1 & 0 & P_2 & P_3 & 0 & 0 & -Q_1 & 0 & -Q_2 & -Q_3 \\
                                               \end{array}\right]^T.$$
\end{prop}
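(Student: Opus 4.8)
The plan is to obtain both resolutions as graded strands of a single Koszul complex resolving the Rees algebra, exactly the strategy for which Lemma \ref{CI} was prepared. Writing $S=R[T_1,T_2,T_3]$ with $\deg T_i=(d,1)$, Lemma \ref{CI} gives $\rees(I)\simeq S/(\ell_1,\ell_2)$ where $\ell_1=P_1T_1+P_2T_2+P_3T_3$ and $\ell_2=Q_1T_1+Q_2T_2+Q_3T_3$ form a regular sequence. Consequently $\rees(I)$ is resolved over $S$ by the Koszul complex
$$0\to Se_{12}\xrightarrow{\ \ell_1e_2-\ell_2e_1\ }Se_1\oplus Se_2\xrightarrow{\ (\ell_1,\ell_2)\ }S\to\rees(I)\to 0,$$
with $\deg e_1=(d+d_0,1)$, $\deg e_2=(d+d_1,1)$, and $\deg e_{12}=(2d+d_0+d_1,2)$. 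This resolution is minimal since $\ell_1,\ell_2$ lie in the bigraded maximal ideal of $S$.

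First I would pass to the bidegree $(*,r)$ strand. Because a bigraded complex is exact if and only if it is exact in each fixed second degree, and because each $S_{(*,b)}$ is a free $R$-module on the degree-$b$ monomials in the $T_i$, the strand is a complex of free $R$-modules; since $\rees(I)_{(*,r)}=I^rt^r\cong I^r$, it resolves $I^r$. The $R$-ranks are the numbers of $T$-monomials of the relevant degree, namely $\binom{r+2}{2}$, $2\binom{r+1}{2}$, and $\binom{r}{2}$ in homological degrees $0,1,2$, which for $r=2$ and $r=3$ produce the Betti numbers $6,6,1$ and $10,12,3$. To fix the twists I would invoke the Hilbert--Burch relation $d=d_0+d_1$ (the minimal generators of $I$ are the $2\times 2$ minors of $A$, each of degree $d_0+d_1$); substituting it into the shifts computed from the bidegrees above, e.g. $\deg e_{12}=(3d,2)$, returns precisely the twists displayed for $I^2$ and $I^3$. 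Minimality descends to the strand because $d_0,d_1\geq 1$, so every entry of every differential lies in $\mathfrak m=(x,y,z)$.

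Next I would read off the explicit matrices from the leftmost Koszul differential restricted to the strand. A basis generator $T^{\alpha}e_{12}$ with $|\alpha|=r-2$ maps to
$$T^{\alpha}\bigl(\ell_1e_2-\ell_2e_1\bigr)=\sum_i P_i\,(T^{\alpha}T_i)\,e_2-\sum_i Q_i\,(T^{\alpha}T_i)\,e_1,$$
so re-expanding each product $T^{\alpha}T_i$ in the chosen degree-$(r-1)$ monomial bases of $Se_1$ and $Se_2$ yields a matrix whose nonzero entries are exactly the $P_i$ and $-Q_i$. For $r=2$ the single generator $e_{12}$ gives the column $X$; for $r=3$ the three generators $T_1e_{12},T_2e_{12},T_3e_{12}$ give the three rows of the array displayed for $Y$, and the pattern of zeros encodes which monomial $T_kT_i$ equals which degree-two monomial.

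I expect the only real difficulty to be bookkeeping rather than conceptual. One must verify that homogeneity forces the placement of the entries: an entry into the summand $R(-rd-d_1)$ must have degree $d_0$ and hence is a $P_i$, while an entry into $R(-rd-d_0)$ is a $Q_i$, and the chosen ordering of monomial bases together with the Koszul sign convention must be matched against the displayed arrangement of $X$ and $Y$. Once this degree and sign accounting is carried out, the proposition follows directly from the strand-of-Koszul-complex technique made available by Lemma \ref{CI}.
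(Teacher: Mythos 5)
Your proposal is correct and follows essentially the same route as the paper: Lemma \ref{CI} gives the Koszul resolution of $\rees(I)$ over $S$, and restricting to the bidegree $(*,2)$ and $(*,3)$ strands yields the stated resolutions of $I^2$ and $I^3$, with $X$ and $Y$ read off from the last Koszul differential. Your additional verifications (exactness of strands, rank and twist counts via $d=d_0+d_1$, and minimality from $d_0,d_1\geq 1$) are details the paper leaves implicit, but the argument is the same.
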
 
\begin{proof}
Resolving the complete intersection $\rees(I)$ over $S$ we obtain:
\begin{equation*}\label{reesres}
0 \lra S(-2d,-2) \stackrel{M}{\lra} S(-d-d_1,-1) \oplus S(-d-d_0,-1) \lra S \lra \rees(I) \lra 0.
\end{equation*}
Restricting this complex to its strands of degree $(*,2)$ and $(*,3)$  and using that $S_{(*,0)}\simeq R$, $S_{(*,1)}=R\ T_1\oplus R\ T_2 \oplus R\ T_3 \simeq R^3$ and $S_{(*,1)}=\oplus_{1\leq i \leq j\leq3}R\ T_iT_j \simeq R^6$, one obtains the resolutions stated in this Proposition. Furthermore, Lemma \ref{CI} yields that the map of $S$-modules labeled by $M$ in the resolution of the complete intersection $ \rees(I) $ can be chosen to be 
$$M=\left[\begin{array}{cc}P_1T_1+P_2T_2+ P_3T_3 & -(Q_1T_1+Q_2T_2+Q_3T_3) \end{array}\right]^T.$$
Writing the $R$-module homomorphisms induced by $M$ on the aforementioned strands of the complex in terms of the canonical $R$-bases $\mathcal{B}_0=\{1\}$ of $S_{(*,0)}$, $\mathcal{B}_1=\{T_1, T_2, T_3\}$  of $S_{(*,1)}$ and $\mathcal{B}_2=\{T_1^2, T_1T_2, T_2^2, T_2T_3, T_3^2\}$ for $S_{(*,2)}$ yields the descriptions of $X$ and $Y$ given in the statement of this Proposition.
\end{proof}

\begin{rem}
The idea of using Rees algebra techniques to provide an explicit description of minimal free resolutions of all ordinary powers of a uniformly 3-generated ideal of points in $\pr{2}$ will be exploited further in \cite{refNaSe}. In particular, a formula for the minimal free resolutions and for the Castelnuovo-Mumford regularity of all ordinary powers of  a uniformly 3-generated ideal of points in $\pr{2}$ will appear in full detail in \cite{refNaSe}. For the purposes of this note, we only require knowledge of the resolutions of $I^2$ and $I^3$ and a good command of the maps appearing therein, as  illustrated in Proposition \ref{res}.
\end{rem}

\section{A homological criterion for  the containment $I^{(m)}\subseteq I^r$}\label{sect3}

In this section, let $m=(x,y,z)$ be the graded maximal ideal of $R=\field[x,y,z]$ and let $m\geq r>0$ be integers. We view the containment $I^{(m)}\subseteq I^r$ through the lens of the natural map between the local cohomology modules $H^0_m(R/I^r)\to H^0_m(R/I^r)$. Interpreting the dual of this map from a homological perspective will yield a criterion for establishing (the failure of) the containment $I^{(m)}\subseteq I^r$.

\begin{prop}\label{crit}
Let $I$ be a homogeneous ideal. Consider the short exact sequence
$$0 \longrightarrow I^r/I^m \longrightarrow R/I^m \stackrel{\pi}{\longrightarrow} R/I^r \longrightarrow 0.$$The following statements are equivalent:
\begin{enumerate}
\item $I^{(m)}\subseteq I^r$
\item the induced map $H^0_m(\pi):  H^0_m(R/I^m) \to  H^0_m(R/I^r)$ is the zero homomorphism
\item  the induced map $\Ext^3(\pi):  \Ext^3_R(R/I^r,R) \to  \Ext^3_R(R/I^m,R)$ is the zero homomorphism
\end{enumerate}
\end{prop}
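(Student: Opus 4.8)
The plan is to establish $(1)\Leftrightarrow(2)$ by computing the map $H^0_m(\pi)$ explicitly, and then $(2)\Leftrightarrow(3)$ via graded local duality over the polynomial ring $R$.

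First I would use the standard identification of the zeroth local cohomology with the $m$-saturation: for any homogeneous ideal $J$ one has $H^0_m(R/J)\cong (J:m^\infty)/J$, the $m$-torsion submodule of $R/J$. In the setting of an ideal $I$ defining a reduced set of points, the only associated prime of a power $I^k$ other than the minimal primes of $I$ is the maximal ideal $m$ itself, so that $(I^k:m^\infty)=I^{(k)}$. This yields
$$H^0_m(R/I^m)\cong I^{(m)}/I^m \qquad \text{and} \qquad H^0_m(R/I^r)\cong I^{(r)}/I^r.$$
Under these identifications the functorial map $H^0_m(\pi)$ is induced by the projection $\pi$, hence sends a class $f+I^m$ with $f\in I^{(m)}$ to $f+I^r$; since $m\geq r$ gives $I^{(m)}\subseteq I^{(r)}$, the image lies in the target. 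This map is the zero homomorphism exactly when every $f\in I^{(m)}$ already lies in $I^r$, i.e. precisely when $I^{(m)}\subseteq I^r$, giving $(1)\Leftrightarrow(2)$.

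For $(2)\Leftrightarrow(3)$ I would invoke graded local duality over the Gorenstein ring $R=\field[x,y,z]$, which has dimension $3$ and canonical module $\omega_R=R(-3)$. Writing $(-)^\vee$ for the graded Matlis dual, local duality furnishes a natural isomorphism of functors $H^0_m(-)^\vee\cong \Ext^3_R(-,R)(-3)$. Applying this to $\pi$ and using naturality identifies the dual map $H^0_m(\pi)^\vee$ with $\Ext^3(\pi)$ up to the degree shift by $-3$. The modules $H^0_m(R/I^m)$ and $H^0_m(R/I^r)$ have finite length, so graded Matlis duality is exact and faithful on them, whence a homomorphism vanishes if and only if its dual vanishes. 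Thus $H^0_m(\pi)=0$ if and only if $\Ext^3(\pi)=0$, which is $(2)\Leftrightarrow(3)$.

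I expect the crux to lie not in any computation but in two bookkeeping points. The first is the identification $(I^k:m^\infty)=I^{(k)}$; this is what pins the argument to ideals of points (or more generally to ideals whose powers have no embedded prime other than $m$), and it is needed for the implication $(2)\Rightarrow(1)$, since in general one only has the containment $(I^m:m^\infty)\subseteq I^{(m)}$. The second, and more delicate, is the naturality of the local duality isomorphism in the module variable: this is precisely what guarantees that the Matlis dual of the functorial map $H^0_m(\pi)$ is the functorial map $\Ext^3(\pi)$, and not merely some abstractly isomorphic homomorphism. Once these are secured, both equivalences follow formally from the exactness and faithfulness of the duality functors.
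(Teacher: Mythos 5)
Your proposal is correct and follows essentially the same route as the paper: the equivalence $(1)\Leftrightarrow(2)$ via the identification $H^0_m(R/I^k)\cong I^{(k)}/I^k$ and the explicit description of $H^0_m(\pi)$, and $(2)\Leftrightarrow(3)$ via graded local duality over $R$ with canonical module $R(-3)$. Your explicit remark that the identification of the saturation $(I^k:m^\infty)$ with the symbolic power $I^{(k)}$ is what pins the argument to ideals whose powers have no embedded primes other than $m$ is a point the paper leaves implicit, and is a welcome clarification rather than a deviation.
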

\begin{proof}
$(1)\Leftrightarrow (2):$ The map in $(2)$ can be rewritten explicitly, using the isomorphisms $H^0_m(I^r)=\frac{I^{(r)}}{I^r}$ and $H^0_m(I^m)=\frac{I^{(m)}}{I^m}$,  as $$H^0_m(\pi): \frac{I^{(m)}}{I^m} \to \frac{I^{( r )}}{I^r}.$$ Thus the map $H^0_m(\pi)$ is can be viewed as the composition of the inclusion $I^{(m)}\hookrightarrow I^{( r  )}$ with the canonical projection $ I^{( r )} \to \frac{I^{( r )}}{I^r}$. It follows that the image of this map  is  the zero module if and only if $\frac{I^{( m )}}{I^r}=0$, that is if and only if $I^{(m)}\subseteq I^r$.

$(2)\Leftrightarrow (3):$ The short exact sequence in the statement yields a long exact sequence which features the maps in (2) and (3) of this Lemma  as labeled below 
  $$\ldots \to\Ext^2_R(R/I^m,R)\to\Ext^2_R(I^r/I^m,R)\stackrel{\delta}{\to}\Ext^3_R(R/I^r,R) \stackrel{\Ext^3(\pi)}{\longrightarrow}  \Ext^3_R(R/I^m,R) \to  \Ext^3_R(I^r/I^m,R) \to 0.$$
  The equivalence $(2)\Leftrightarrow (3)$ follows from local duality, since for every integer $t$  there exist natural vector space isomorphisms between $H^0_m(R/I^r)_t\simeq \Ext^3_R(R/I^r,R(-3))_{-t-3}$ and $H^0_m(R/I^m)_t\simeq \Ext^3_R(R/I^m,R(-3))_{-t-3}$ under which the map $\Ext^3(\pi)_{-t-3}$ becomes the dual of the map $H^0_m(\pi)_t$. Thus either one of these maps is zero if and only if the other one is.
  \end{proof}

Next we give an effective way of checking the validity of the equivalent statements in Proposition \ref{crit} for the case when $I$ is a strict almost complete intersection defining a reduced set of points in $\pr{2}$, $I$ is generated in a single degree $d$ and $m=3, r=2$. 
    
    
\begin{prop}\label{inj}
Let $I$ be a homogeneous ideal with three minimal generators of the same degree $d$, defining a reduced set of points in $\pr{2}$ over a field of characteristic not equal to 3. Then the map $\Ext^3_R(R/I^2,R) \to  \Ext^3_R(R/I^3,R)$ is zero if and only if  the vector whose entries are the minimal generators of $I$ is in the image of the dual of the last non-zero map in the resolution of $R/I^3$. 
\end{prop}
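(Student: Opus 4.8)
The plan is to realize the map in the statement as a map of cokernels and then to compute it by means of one explicit comparison chain map. Write $F_\bullet$ and $G_\bullet$ for the minimal free resolutions of $R/I^2$ and $R/I^3$ obtained by augmenting the resolutions of Proposition \ref{res} with $R$ in homological degree $0$; both have length $3$, with top terms $F_3=R(-3d)$ and $G_3=R(-4d)^3$ and top differentials $X$ and $Y$. Dualizing into $R$ and taking cohomology at the right end gives $\Ext^3_R(R/I^2,R)\cong\operatorname{coker}(X^\ast)$ and $\Ext^3_R(R/I^3,R)\cong\operatorname{coker}(Y^\ast)$, and the map in question is the one induced by $\phi_3^\ast\colon F_3^\ast=R(3d)\to G_3^\ast=R(4d)^3$ for any chain map $\phi_\bullet\colon G_\bullet\to F_\bullet$ lifting the quotient $\pi\colon R/I^3\to R/I^2$. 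Since $\operatorname{coker}(X^\ast)$ is cyclic, generated by the class of a generator $\xi$ of $R(3d)$, the induced map vanishes if and only if $\phi_3^\ast(\xi)\in\operatorname{im}(Y^\ast)$. Everything thus reduces to identifying the top component $\phi_3$ of a single comparison map.

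To produce $\phi_\bullet$ I would not lift $\pi$ degree by degree, but exhibit the whole chain map at once from the Rees-algebra picture of Proposition \ref{res}. Both resolutions are bigraded strands of the Koszul complex resolving $\rees(I)=S/(\ell_1,\ell_2)$, with $\ell_1=\sum_j P_jT_j$, $\ell_2=\sum_j Q_jT_j$, and with augmentations $\epsilon_r\colon S_{(\ast,r)}\to R$ sending a monomial in the $T_j$ to the corresponding product of the generators $f,g,h$. Consider the $R$-linear operator
$$\partial=\sum_{j=1}^3 f_j\,\frac{\partial}{\partial T_j}\colon S\longrightarrow S,$$
which carries $S_{(\ast,b)}$ to $S_{(\ast,b-1)}$ and so acts on every strand. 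The key point is that $\partial$ commutes with multiplication by $\ell_1$ and $\ell_2$: for $G\in S$ one has $\partial(\ell_iG)=\bigl(\sum_j f_j\,\partial\ell_i/\partial T_j\bigr)G+\ell_i\,\partial(G)$, and the first term vanishes because $\sum_j f_jP_j=\sum_j f_jQ_j=0$ are precisely the two Hilbert–Burch syzygies recorded by the columns of $A$. As the Koszul differentials are multiplication by $\ell_1,\ell_2$ (up to sign), $\partial$ commutes with them and hence induces a chain map on each strand. Checking the augmentations gives $\epsilon_2\circ\partial=3\,\epsilon_3$ on $S_{(\ast,3)}$ (this is where the total degree $3$ enters, via Euler's relation), so after dividing by $3$ — which is legitimate because $\operatorname{char}\field\neq3$ — the maps $\phi_0=\mathrm{id}$, $\phi_1=\tfrac13\partial$, $\phi_2=\tfrac13\partial^{\oplus2}$, $\phi_3=\tfrac13\partial$ assemble into a comparison chain map lifting $\pi$, with the same scalar $\tfrac13$ occurring at every stage.

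Finally I would read off the conclusion. On the top strand $\partial\colon S_{(\ast,1)}\to S_{(\ast,0)}=R$ is just $T_j\mapsto f_j$, so $\phi_3=\tfrac13[f,g,h]$ and its dual sends the generator $\xi$ to $\tfrac13(f,g,h)^{\mathsf{T}}\in G_3^\ast$. Because $\tfrac13$ is a unit, $\tfrac13(f,g,h)\in\operatorname{im}(Y^\ast)$ if and only if $(f,g,h)\in\operatorname{im}(Y^\ast)$; combined with the first paragraph this says exactly that $\Ext^3_R(R/I^2,R)\to\Ext^3_R(R/I^3,R)$ is zero if and only if the vector of minimal generators of $I$ lies in the image of the dual of $Y$, the last nonzero map in the resolution of $R/I^3$. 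I expect the heart of the argument — and the main obstacle — to be the second paragraph: recognizing the differential operator $\partial$ as a (scaled) chain map, which rests entirely on the vanishing $\sum_j f_jP_j=\sum_j f_jQ_j=0$, and verifying that the normalizing constant is the single scalar $\tfrac13$ throughout, so that the hypothesis $\operatorname{char}\field\neq3$ is precisely what the construction requires.
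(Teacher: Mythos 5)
Your proof is correct and takes essentially the same approach as the paper: your contraction operator $\partial=\sum_j f_j\,\partial/\partial T_j$ is precisely the paper's downgrading map $\mu$, the commutation with multiplication by $\ell_1,\ell_2$ rests on the same two syzygy identities $\sum_j f_jP_j=\sum_j f_jQ_j=0$, and your Euler-relation factor of $3$ (removable since the characteristic is not $3$) is exactly the paper's observation that the downgrading map lifts $3\iota$. The only difference is packaging: you lift $\pi$ on the augmented resolutions of $R/I^3$ and $R/I^2$ with the scalar $\tfrac13$ built into the chain map, while the paper lifts $3\iota$ between the resolutions of the ideals $I^3$ and $I^2$ and then passes to $\pi$ via $\Ext^3(\pi)=\Ext^2(\iota)$ and the invertibility of $3$.
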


\begin{proof}
By Proposition \ref{res}, the ideals $I^2$ and $I^3$ admit minimal free resolutions described below 

\begin{equation*}
\begin{CD} 
0 @>>>R(-4d)^3 @>Y>>  R(-3d-d_0)^6 \oplus R(-3d-d_1)^6@>>>R(-3d)^{10} @>>> I^3 @>>> 0\\
 @.	@VV\mu_1V	@VV\mu_2V @VV\mu_3V	@VV3\iota V	@.\\ 
0 @>>>R(-3d) @>>>  R(-2d-d_0)^3 \oplus R(-2d-d_1)^3@>>>R(-2d)^6 @>>> I^2 @>>> 0\\
  \end{CD}
  \end{equation*}
  
Let $f,g,h$ be a set of minimal generators of $I$ such that a bijection between $S/L_1$ and $\Sym(I)$ is given by $T_1\mapsto f, T_2\mapsto g, T_3\mapsto h$.  We denote by $\mu_i:S_{(*,i)}\to S_{(*,i-1)}$ the downgrading homomorphism, namely the $R$-module map which is given on an $R$-basis of $S_{(*,i)}$  by $\mu_i(T_{k_1}\ldots T_{k_i})=\sum_{j=1}^i f_jT_{k_1}\ldots \widehat{T_{k_j}} \ldots T_{k_i}$.   If $\iota: I^3 \hookrightarrow I^2$ denotes the natural inclusion, it is easy to see that $\mu_3$ induces the map $3\iota:I^3\to I^2$ via the projection $S\to \Sym(I)$, making the rightmost square of the above diagram commute. Furthermore, using the notation of Lemma \ref{CI} and setting $F=P_1T_1+P_2T_2+P_3T_3$ and $G=Q_1T_1+Q_2T_2+Q_3T_3$  to be the minimal generators of the defining ideal of $\rees(I)$, one has $\mu_1(F)=\mu_1(G)=0$ because $P_1f+P_2g+P_3h=Q_1f+Q_2g+Q_3h=0$. Consequently, extending the fact that for monomials $m\in S_{(*,j)}$ and $m'\in S_{(*,k)}$ with $i=j+k$ one has $\mu_i(mm')=\mu_j(m)m'+m\mu_k(m')$, we obtain
\begin{eqnarray*}
\mu_i(Fs) &=& \mu_1(F)s+F\mu_{i-1}(s)=F\mu_{i-1}(s)\mbox{ and }\\
\mu_i(Gs) &=& \mu_1(G)s+G\mu_{i-1}(s)=G\mu_{i-1}(s),\mbox{ for all } s\in S_{(*,i-1)}.
\end{eqnarray*}
Since the maps in the resolutions of powers of $I$ are induced by componentwise multiplication by $F$ or $G$, according to Theorem \ref{res},  one obtains a commutative diagram of $R$-modules and $R$-module homomorphisms as illustrated in the beginning of the proof. 

Since the map $\iota$ lifts the homomorphism $\pi$ of Proposition \ref{crit}, we have by homological shifting that  $\Ext^3(\pi)=\Ext^2(\iota)$. The matrix representing $\mu_1^*$ with respect to the canonical $R$-bases of $S_{(*,1)}$ and $S_{(*,0)}$  is $[f \ g \ h ]$. Therefore, after dualizing, the map $\Ext^2(3\iota)$ is given by $1\mapsto [f \ g \ h ]^T$. If $3$ is invertible then  $\Ext^2(\iota)=3^{-1} \Ext^2(3\iota)$, thus the map $\Ext^3(\pi)=\Ext^2(\iota)$ is injective if and only if  the vector $[f \ g \ h ]^T$ is not zero in $\Ext^3(R/I^3,R)$. Since $\Ext^3(R/I^3,R)=R(-4d)^3/ \rm{Image}({Y^T})$, the map $\Ext^3(\pi)$ is the zero map  if and only if the vector $[f \ g \ h ]^T$  is  contained in the image of $Y^T$, where $Y$ is the last non-zero map in the minimal free resolution of $I^3$ and $Y^T=\Hom(Y,R)$. \end{proof}

Combining the results of this section we obtain an effective version of our criterion:

\begin{thm}\label{main}
Let $I$ be a 3-generated homogeneous ideal with minimal generators $f,g,h$ of the same degree $d$, defining a reduced set of points in $\pr{2}$ over a field of characteristic not equal to 3. Set  $Y$ to be the matrix representing the last homomorphism in the the minimal free resolution of $I^3$, as described in Proposition \ref{res}:
$$0\longrightarrow R^3\stackrel{{Y}}{\longrightarrow}R^{12}\longrightarrow  R^{10} \longrightarrow I^3 \longrightarrow 0.$$
Then $I^{(3)} \subseteq (I^2)$ if and only if  $\left[f \ g \ h \right]^T \in \rm{Image}({Y^T})$.
\end{thm}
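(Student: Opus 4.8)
The plan is to assemble the two criteria established earlier in this section, specialized to $m=3$ and $r=2$. First I would invoke Proposition~\ref{crit} for these values: the short exact sequence
$$0 \longrightarrow I^2/I^3 \longrightarrow R/I^3 \stackrel{\pi}{\longrightarrow} R/I^2 \longrightarrow 0$$
shows that the containment $I^{(3)}\subseteq I^2$ holds if and only if the induced map $\Ext^3_R(R/I^2,R)\to\Ext^3_R(R/I^3,R)$ is the zero homomorphism. This reduces the containment question to a purely homological statement about a single map.

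Next I would apply Proposition~\ref{inj}, whose hypotheses---three minimal generators of equal degree $d$, a reduced set of points in $\pr{2}$, and characteristic different from $3$---coincide exactly with those assumed here. It rephrases the vanishing of the $\Ext$ map above as the concrete condition that the vector $[f\ g\ h]^T$, whose entries are the minimal generators of $I$, lie in the image of the dual of the last non-zero map in the minimal free resolution of $R/I^3$.

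It then remains only to identify this dual map with $Y^T$. Here I would observe that the minimal free resolution of $R/I^3$ is obtained from the resolution of $I^3$ recorded in Proposition~\ref{res} by appending the surjection $R\to R/I^3$ on the right; hence its last (highest-degree) non-zero differential is precisely the matrix $Y$ displayed there. Applying $\Hom(-,R)$ therefore identifies $\Ext^3_R(R/I^3,R)$ with $\operatorname{coker}(Y^T)$, so the membership supplied by Proposition~\ref{inj} reads exactly $[f\ g\ h]^T\in\operatorname{Image}(Y^T)$, which is the assertion of the theorem.

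Because Propositions~\ref{crit} and~\ref{inj} already carry all of the substantive homological content, I do not expect a genuine obstacle here; the statement is essentially the effective, matrix-theoretic repackaging of those results. The only points demanding care are the bookkeeping of graded twists under dualization and the verification that augmenting the resolution of $I^3$ to one of $R/I^3$ leaves its last differential unchanged. I would also flag that the hypothesis $\char\field\neq 3$ is used not here but inside Proposition~\ref{inj}, where invertibility of $3$ allows one to pass between the maps induced by $\iota$ and by $3\iota$.
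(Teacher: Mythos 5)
Your proposal is correct and matches the paper's own proof, which is simply the one-line observation that the theorem follows by combining Propositions~\ref{crit} and~\ref{inj}; your additional bookkeeping (identifying the last differential of the augmented resolution of $R/I^3$ with $Y$ and noting where $\char\field\neq 3$ enters) is exactly the implicit content of that combination.
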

\begin{proof}
The Theorem follows from  Propositions \ref{crit} and \ref{inj}.
\end{proof}

\begin{rem}
Theorem \ref{main} can be regarded also as a computationally efficient criterion. Indeed, if successful, this criterion involves checking non-containment of a fixed element, namely $\left[f \ g \ h \right]^T$ in  a module generated in degrees at most $d-1$, namely $\rm{Image}({Y^T})$. By contrast, for saturating $I^3$ one has to deal with generators of  degree $3d$ and  usually  a well determined element is not readily available in $I^{(3)}$ for which to test containment. However, in the examples of section \ref{sect4} one  does have such a candidate. In fact, it is true that the product of the lines appearing in the respective configurations is always an element of $I^{(3)}$ but not of $I^2$. See \cite{refDST} and \cite{refHS} for a proof of this for the Fermat family. The author has verified computationally that the same is true for the Klein and Wiman configurations.
\end{rem}

With an eye towards the applications in section \ref{sect4}, we end with establishing a setup that allows to verify the hypothesis of Theorem \ref{main}. The resulting criterion is stated quite technically below, but the underlying idea is relatively simple: we know the explicit form of the matrix $Y$ that plays a prominent role in our criterion in Theorem \ref{main} in terms of the entries of the Hilbert-Burch matrix of $I$. We can also express the generators $f,g,h$ of $I$ in terms of the entries of the Hilbert-Burch matrix. Then checking whether $[f \ g \ h]^T\in \Im (Y^T)$ amounts to checking if there exist homogeneous polynomials $w_1,\ldots w_{12}$ satisfying the following matrix equality:
$$\left[\begin{matrix}P_2Q_3-P_3Q_2 \\ P_3Q_1-P_1Q_3\\P_1Q_2-P_2Q_1\end{matrix}\right]=\left[\begin{array}{cccccccccccc}P_1 & P_2 & P_3 & 0 & 0 & 0 & -Q_1 & -Q_2 & -Q_3 & 0 & 0 & 0 \\
                                             0 & P_1 & 0 & P_2 & P_3 & 0 & 0 & -Q_1 & 0 & -Q_2 & -Q_3 & 0 \\
                                            0 & 0 & P_1 & 0 & P_2 & P_3 & 0 & 0 & -Q_1 & 0 & -Q_2 & -Q_3 \\
                                               \end{array}\right] [w_1\ldots w_{12}]^T. $$
If each of the sets $\{P_1, P_2, P_3\}$, $\{Q_1, Q_2, Q_3\}$ and $\{P_2Q_3, P_3Q_2, P_3Q_1, P_1Q_3, P_1Q_2, P_2Q_1\}$ consists of linearly independent forms (this property is most clearly evident in the case when the entries of the Hilbert-Burch matrix are monomials, as in section \ref{sFermat}), the problem can be reduced to standard linear algebra and a contradiction can be obtained.  This is the main idea of Proposition \ref{6=0} below.

\begin{rem}
It is easy to see that in characteristic 2 and 3 the matrix equation above always has solutions. In characteristic 2 a solution is given by
$$ [w_1,\ldots, w_{12}]=[0,Q_3, Q_2 , 0 ,Q_1, 0, 0, 0, 0, 0, 0, 0]$$
and  in characteristic 3, a solution is given by $$ [w_1,\ldots, w_{12}]=[0,Q_3, 0 , 0 ,-Q_1, 0, 0, P_3, 0, 0,-P_1, 0].$$
However, the conclusions that can be derived from this fact with the help of Theorem \ref{main} are different. If $\char(\field)=2$, our theorem allows to conclude that the containment $I^{(3)}\subseteq I^2$ always holds for three-generated ideals $I$ defining points in $\PP_{\field}^2$. This recovers a special case of \cite[Remark 8.4.4]{refPSC}. By contrast, in characteristic 3, Theorem \ref{main} is inconclusive. 
\end{rem}

In the following, $R_i$ denotes the set of homogeneous polynomials of degree $i$ in $R$ and we use similar notation for homogeneous graded components of ideals. 

 \begin{prop}\label{6=0}
With the notation and hypotheses of Theorem \ref{main}, assuming the characteristic of the polynomial ring is not 2 or 3 and setting $\{[P_1,P_2,P_3]^T, [Q_1,Q_2,Q_3]^T\}$ to be  a set of minimal generators of degrees $d_0$ and $d_1$ respectively of the syzygy module of $I$, if  the conditions below are satisfied, then the vector $[f \ g \ h]^T$ is not in the image of the $R$-module map defined by the transpose of the matrix $Y$:
\begin{enumerate}
\item the forms $P_iQ_j$, $1\leq i,j\leq 3, i\neq j$ form a  linearly independent set
\item  there is a basis $\mathcal{B}$  for $R_d$ ($d=d_0+d_1$)
 containing the set of forms in (1) and such that
$$c(\phi_1,P_2Q_3)-c(\phi_1,P_3Q_2)+c(\phi_2,P_3Q_1)-c(\phi_2,P_1Q_3)+c(\phi_3,P_1Q_2)-c(\phi_3,P_2Q_1)=0,$$
 \end{enumerate}
 where $c(\phi,P_iQ_j)$ denotes the coefficient of $P_iQ_j$ in the polynomial $\phi\in R_d$ written in base $\mathcal B$, and $\phi_1, \phi_2, \phi_3$ are defined by
$$
\left[\begin{matrix}\phi_1 \\ \phi_2\\ \phi_3\end{matrix}\right]=
\left[ \begin{matrix} w_1 & w_2 & w_3 \\ w_2 & w_4 &w_5 \\ w_3 & w_5 & w_6 \end{matrix}\right] \left[ \begin{matrix}  P_1 \\ P_2 \\ P_3 \end{matrix}\right] +
\left[ \begin{matrix} w_7 & w_8 & w_9 \\ w_8 & w_{10} &w_{11} \\ w_9 & w_{11} & w_{12} \end{matrix}\right] \left[ \begin{matrix}  Q_1 \\ Q_2 \\ Q_3 \end{matrix}\right] $$
for arbitrary  homogeneous polynomials $w_1,\ldots, w_6$ of degree $d_1$ and $w_7,\ldots, w_{12}$ of degree $d_0$.

\end{prop}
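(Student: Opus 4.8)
The plan is to turn the membership condition $[f\ g\ h]^T\in\Im(Y^T)$ into the vanishing of a single linear functional on $R_d$ and then reach a numerical contradiction. First I would read off the transpose of the matrix $Y$ of Proposition \ref{res} and check that, for a vector $[w_1,\ldots,w_{12}]^T$, the product $Y^T[w_1,\ldots,w_{12}]^T$ is exactly the vector $[\phi_1,\phi_2,\phi_3]^T$ displayed in the statement: the twelve entries $w_i$ organize into the two symmetric $3\times 3$ matrices of the statement (denoted $W_P$ and $W_Q$) multiplying $[P_1\ P_2\ P_3]^T$ and $[Q_1\ Q_2\ Q_3]^T$ respectively, the sign discrepancy on the $Q$-block being harmless since $w_7,\ldots,w_{12}$ range over all forms of degree $d_0$. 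Consequently $[f\ g\ h]^T\in\Im(Y^T)$ if and only if there exist homogeneous $w_i$ of the stated degrees realizing $\phi_1=P_2Q_3-P_3Q_2$, $\phi_2=P_3Q_1-P_1Q_3$ and $\phi_3=P_1Q_2-P_2Q_1$, these being precisely the signed maximal minors $f,g,h$ of the Hilbert--Burch matrix.

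Next I would introduce the coefficient functional. Hypothesis (1) guarantees that the six forms $P_iQ_j$ ($i\ne j$) are linearly independent, so by hypothesis (2) they lie in the basis $\mathcal B$ of $R_d$; relative to $\mathcal B$ each extraction $c(-,P_iQ_j)\colon R_d\to\field$ is a well-defined linear functional, and so is the alternating combination
$$\Lambda(\psi_1,\psi_2,\psi_3)=c(\psi_1,P_2Q_3)-c(\psi_1,P_3Q_2)+c(\psi_2,P_3Q_1)-c(\psi_2,P_1Q_3)+c(\psi_3,P_1Q_2)-c(\psi_3,P_2Q_1)$$
defined on triples of degree-$d$ forms. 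With this notation, hypothesis (2) asserts exactly that $\Lambda(\phi_1,\phi_2,\phi_3)=0$ for all choices of $w_i$; in other words, $\Lambda$ annihilates the degree-$d$ part of $\Im(Y^T)$.

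I would then conclude by evaluating $\Lambda$ on $(f,g,h)$. Suppose, for contradiction, that $[f\ g\ h]^T\in\Im(Y^T)$, so that some $w_i$ realize $(\phi_1,\phi_2,\phi_3)=(f,g,h)$. Because $P_2Q_3$ and $P_3Q_2$ are distinct members of $\mathcal B$ and $f=P_2Q_3-P_3Q_2$, we get $c(f,P_2Q_3)=1$ and $c(f,P_3Q_2)=-1$, and the same computation for $g$ and $h$ yields $c(g,P_3Q_1)=1$, $c(g,P_1Q_3)=-1$, $c(h,P_1Q_2)=1$, $c(h,P_2Q_1)=-1$. Hence $\Lambda(f,g,h)=2+2+2=6$. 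But hypothesis (2) forces $\Lambda(f,g,h)=0$, and since $6\ne 0$ in $\field$ (the characteristic being neither $2$ nor $3$) this is a contradiction. Therefore no such $w_i$ exist and $[f\ g\ h]^T\notin\Im(Y^T)$, as claimed.

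The argument is essentially bookkeeping, so the step I would be most careful about is the very first identification: verifying that $Y^T$ genuinely acts as the symmetric-matrix operator $[w_1,\ldots,w_{12}]^T\mapsto W_P[P_1\ P_2\ P_3]^T+W_Q[Q_1\ Q_2\ Q_3]^T$ of the statement, including the sign normalization, so that hypothesis (2) --- which is phrased as an identity valid for all $w_i$ --- is precisely the assertion that $\Lambda$ kills $\Im(Y^T)$ in degree $d$. Once that is pinned down, everything reduces to the one-line evaluation $\Lambda(f,g,h)=6\ne 0$, with the linear independence in (1) ensuring that this coefficient count is unambiguous.
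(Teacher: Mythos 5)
Your proposal is correct and takes essentially the same approach as the paper: you rewrite $Y^T[w_1,\ldots,w_{12}]^T$ as the symmetric-matrix expression (handling the sign on the $Q$-block exactly as the paper implicitly does), identify $[f\ g\ h]^T$ with the signed minors $(P_2Q_3-P_3Q_2,\ P_3Q_1-P_1Q_3,\ P_1Q_2-P_2Q_1)$, and extract the coefficients of the six basis elements $P_iQ_j$ to reach the contradiction $6=0$ in characteristic $\neq 2,3$. The only cosmetic difference is that you package the six coefficient extractions into a single linear functional $\Lambda$ annihilating $\Im(Y^T)$ in degree $d$, whereas the paper writes out the six equalities $\pm 1 = c(\phi_k,P_iQ_j)$ and takes their alternating sum.
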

\begin{proof}
Suppose  that $[f \ g \ h]^T=Y^T[w_1 \ldots w_{12}]^T$, where $w_1,\ldots w_6\in R_{d_1}$ and $w_7,\ldots,w_{12}\in R_{d_0}$. Recall from Proposition \ref{res} that the matrix $Y$ at the end of the resolution for $I^3$ has the following description:
$$ Y=\left[\begin{array}{cccccccccccc}P_1 & P_2 & P_3 & 0 & 0 & 0 & -Q_1 & -Q_2 & -Q_3 & 0 & 0 & 0 \\
                                             0 & P_1 & 0 & P_2 & P_3 & 0 & 0 & -Q_1 & 0 & -Q_2 & -Q_3 & 0 \\
                                            0 & 0 & P_1 & 0 & P_2 & P_3 & 0 & 0 & -Q_1 & 0 & -Q_2 & -Q_3 \\
                                               \end{array}\right]^T.$$
It is easy to see now that the equality $[f \ g \ h]^T=Y^T[w_1 \ldots w_{12}]^T$ can be rewritten as
$$\left[\begin{matrix}P_2Q_3-P_3Q_2 \\ P_3Q_1-P_1Q_3\\P_1Q_2-P_2Q_1\end{matrix}\right]=
\left[ \begin{matrix} w_1 & w_2 & w_3 \\ w_2 & w_4 &w_5 \\ w_3 & w_5 & w_6 \end{matrix}\right] \left[ \begin{matrix}  P_1 \\ P_2 \\ P_3 \end{matrix}\right] +
\left[ \begin{matrix} w_7 & w_8 & w_9 \\ w_8 & w_{10} &w_{11} \\ w_9 & w_{11} & w_{12} \end{matrix}\right] \left[ \begin{matrix}  Q_1 \\ Q_2 \\ Q_3 \end{matrix}\right]. $$
Taking the coefficients of the forms $P_iQ_j$, $1\leq i,j\leq 3, i\neq j$ in the equations above we have:
\begin{eqnarray*}
1 &=& c(\phi_1,P_2Q_3) \\
-1&=& c(\phi_1,P_3Q_2) \\
1 &=& c(\phi_2,P_3Q_1) \\
-1 &=& c(\phi_2,P_1Q_3) \\
1 &=& c(\phi_3,P_1Q_2)\\
-1 &=&c(\phi_3,P_2Q_1).
\end{eqnarray*}
The alternating sum of the six equalities above yields $6=0$, a contradiction in any characteristic different from 2 or 3.
\end{proof}

\begin{rem}
All results of sections \ref{sect2} and \ref{sect3}  remain true with the same proofs if one replaces throughout the requirement that $I$ be a 3-generated homogeneous ideal with generators of the same degree $d$, defining a reduced set of points in $\pr{2}$ with the weaker hypothesis that $I$ is an unmixed height two, locally complete intersection 3-generated homogeneous ideal of $\field[x,y,z]$, with generators of the same degree $d$. 
\end{rem}

\section{Application to the case of Fermat and Klein configurations}\label{sect4}

\subsection{The Fermat family}\label{sFermat}
Let $n\geq 3$ be and integer and let $\field$ be a field with $\char(\field)\neq 2$ containing  $n$ distinct $n^{\rm{th}}$ roots of 1. Consider the family of ideals $$I=(x(y^n-z^n), y(z^n-x^n),z(x^n-y^n))\subseteq \field[x,y,z].$$  These ideals were introduced in \cite{refDST} and further discussed in \cite{refBCH}, \cite{refHS}. In \cite[Proposition 2.1]{refHS} it is proved that these ideals define a  reduced set of $n^2+3$ points in $\pr{2}$. In \cite{refDHNSST} the configurations of points described by this family of ideals  are termed  Fermat configurations.

\begin{prop}\label{Fermat}
Let $I$ be the defining ideal of a Fermat configuration over a field $\field$ of characteristic $\char(\field)\neq 2, 3$. Then $I^{(3)} \not \subseteq I^2$.
\end{prop}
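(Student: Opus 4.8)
The plan is to invoke Theorem \ref{main} via its sufficient condition, Proposition \ref{6=0}; everything hinges on producing the Hilbert-Burch matrix of $I$ explicitly and then checking the two hypotheses of that proposition.

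First I would determine the syzygy matrix. A direct verification shows that the two columns $[P_1,P_2,P_3]^{T}=[x^{n-1},y^{n-1},z^{n-1}]^{T}$ and $[Q_1,Q_2,Q_3]^{T}=[yz,xz,xy]^{T}$ produce the generators as the signed maximal minors of Proposition \ref{6=0}: for instance $P_2Q_3-P_3Q_2 = xy^{n}-xz^{n}=f$, and likewise $P_3Q_1-P_1Q_3=g$ and $P_1Q_2-P_2Q_1=h$. Thus the Hilbert-Burch matrix has monomial entries, with $d_0=n-1$, $d_1=2$, and $d=d_0+d_1=n+1$ as expected. Condition (1) of Proposition \ref{6=0} is then immediate: the six products $P_iQ_j$ ($i\neq j$) are the pairwise distinct monomials $x^{n}y,\ x^{n}z,\ xy^{n},\ y^{n}z,\ xz^{n},\ yz^{n}$, hence linearly independent, and I may take $\mathcal B$ to be the standard monomial basis of $R_{n+1}$.

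The substantive step is condition (2): showing that the alternating sum of the six coefficients of the $\phi_j$ vanishes for every choice of the $w_i$. Writing $\phi_j=\sum_k M_{jk}P_k+\sum_k N_{jk}Q_k$, where $M$ and $N$ are the two symmetric $3\times 3$ matrices of entries $w_\bullet$ appearing in Proposition \ref{6=0}, I would argue by degree bookkeeping — using crucially that $n\geq 3$ — that the monomial $P_aQ_b$ can arise in $\phi_j$ only through the two summands $M_{ja}P_a$ and $N_{jb}Q_b$, so that $c(\phi_j,P_aQ_b)=c(M_{ja},Q_b)+c(N_{jb},P_a)$; every other summand carries the wrong exponent in some variable and drops out. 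Substituting these expressions into the alternating sum and invoking the symmetry $M_{jk}=M_{kj}$, $N_{jk}=N_{kj}$, each of the six resulting coefficients occurs exactly twice with opposite signs, so the sum telescopes to $0$. This symmetry-driven cancellation is the heart of the argument and the step I expect to require the most care.

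With both hypotheses verified and $\char(\field)\neq 2,3$, Proposition \ref{6=0} gives $[f\ g\ h]^{T}\notin\Im(Y^{T})$, and Theorem \ref{main} then yields $I^{(3)}\not\subseteq I^{2}$. The main obstacle is entirely in condition (2): correctly ruling out the spurious contributions to each coefficient (which is where $n\geq 3$ enters) and organizing the symmetric cancellation; by contrast, locating the Hilbert-Burch matrix and checking linear independence are routine.
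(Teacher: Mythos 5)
Your proposal is correct and follows essentially the same route as the paper: same Hilbert--Burch matrix $[x^{n-1},y^{n-1},z^{n-1}]$, $[yz,xz,xy]$, the same observation that the products $P_iQ_j$ are distinct monomials, and the same coefficient identities $c(\phi_j,P_aQ_b)=c(M_{ja},Q_b)+c(N_{jb},P_a)$ whose alternating sum cancels by the symmetry of the $w$-matrices, feeding into Proposition \ref{6=0} and Theorem \ref{main}. Your explicit degree bookkeeping (using $n\geq 3$) to rule out spurious contributions is exactly the implicit content of the paper's six displayed identities, so the two arguments coincide.
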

\begin{proof}
By definition, the ideals of the Fermat configuration have three minimal generators, all of degree $d=j+1$. Furthermore, as remarked in \cite{refDHNSST}, the minimal free resolution of $F$ has Hilbert-Burch matrix given by $\left[\begin{matrix} x^{n-1} &  y^{n-1} &  z^{n-1} \\  yz & xz & xy \end{matrix}\right]^T$. It is clear for $P_1=x^{n-1}, P_2= y^{n-1}, P_3=z^{n-1})$ and $Q_1=yz, Q_2=xz, Q_3=xy$  that condition (1) of Proposition \ref{6=0} is satisfied since the pairwise products are distinct monomials. 

Set $\mathcal B$ to be the monomial basis of $R_{n+1}$ and let the function $c(\psi,m)$ represent the coefficient of the monomial $m$ in a homogeneous polynomial $\psi$ of the same degree as $m$. With the notation in the statement of Proposition \ref{6=0} we have, using the function $c(-)$ on the right hand side of the the identities below to mean the coefficient of the monomial $P_i$ or $Q_j$ respectively in a homogeneous polynomial written in terms of the standard monomial basis of $R_2$ or $R_{n-1}$ respectively: 
\begin{eqnarray*}
 c(\phi_1,P_2Q_3)&=& c(w_2,Q_3)+c(w_9,P_2) \\
c(\phi_1,P_3Q_2)&=& c(w_3,Q_2)+c(w_8,P_3) \\
c(\phi_2,P_3Q_1)&=& c(w_5,Q_1)+c(w_8,P_3) \\
c(\phi_2,P_1Q_3) &=& c(w_2,Q_3)+c(w_{11},P_1) \\
c(\phi_3,P_1Q_2) &=& c(w_3,Q_2)+c(w_{11},P_1)\\
c(\phi_3,P_2Q_1) &=& c(w_5,Q_1)+c(w_9,P_2).
\end{eqnarray*}
Taking the alternating sum of these identities verifies condition (2) of Proposition \ref{6=0}. Combining Proposition \ref{6=0} with Theorem \ref{main} yields the desired result $I^{(3)} \not \subseteq I^2$.
\end{proof}

The result in Proposition \ref{Fermat} has previously  been proved first in \cite{refDST} for $n=3$ and later in \cite{refHS} for all $j$, based on the method of \cite{refDST}.  As new results, we are able to prove theoretically that another configuration, presented below, provides a counterexample to  $I^{(3)} \subseteq I^2$.

\subsection{The Klein configuration}\label{sKlein}

The Klein configuration of points (and lines) has been first introduced by Klein \cite{refKl}, Burnside \cite{refBu}, Coxeter \cite{refCo} and extensively studied in a number of later works, for example \cite{refGr}. The recent preprint  \cite{Bea} is the first to point out and verify computationally that for the Klein configuration the symbolic cube is not contained in the ordinary square of the defining ideal. In this section, we apply our criterion to give a conceptual proof of this fact.

  Although the Klein configuration is usually defined over the complex field (indeed, it is a noteworthy property that it cannot be realized in the real plane), it also makes sense to define it over finite fields, as noted in \cite{refGr} where the configuration over the field $\mathbb{F}_7$ is first introduced. 

Here we generalize the classical setting by defining the  Klein configuration over an arbitrary field (of any characteristic) as follows: let $\field$ be a field and consider the quadratic equation $t^2+t+2=0$.  Let $c$ and $\bar c$ be the roots of this equation in the splitting field $\FF$ of this equation over $\field$ (i.e. if $c,\bar c \in K$, then $\FF=K$, otherwise  $\FF=K[t]/(t^2+t+2)$). The Klein configuration  in $\PP_\FF^2$ consists of 49 $\FF$-rational points given by taking arbitrary permutations of the coordinates of the following points
\begin{equation}\label{klein}
\begin{matrix}
(1:0:0), & (1:\pm 1:0), & (1:\pm1: \pm \bar c) \\
(c:\pm 1:0), & (1:\pm 1:\pm 1), & (\bar c ^2: \pm 1: \pm 1). 
\end{matrix}
\end{equation}

These points arise as the reduced singular locus of an arrangement of 21 lines which are Hermitian dual to the 21 points listed in the first row of (\ref{klein}), namely the coefficients of these 21 lines are obtained by by taking arbitrary permutations of the following triples:
$$
\begin{matrix}
\left[1:0:0\right], & [1:\pm 1:0], & [1:\pm1: \pm c] .
\end{matrix}
$$
A remarkable property of the Klein configuration is that among the 49 points, the 21 listed in the first row of (\ref{klein}) are quadruple points for this line arrangement, while the 28 points listed in the second row of (\ref{klein}) are triple points. It is in fact the absence of double points that  clearly indicates why this configuration cannot be realized in the real projective plane. According to a theorem of Sylvester-Gallai, given a finite number of points in the real  plane, not all collinear, there is a line which contains exactly two of the points. The projective dual, version of this theorem then insures that any arrangement of lines in the real projective plane has at least one double point (a point that belongs to only two  of the lines), provided the lines are not all collinear at a single point.

We now show that the defining ideal of the Klein configuration fits into the class of strict almost complete intersections in most cases.

\begin{prop}\label{gensKlein}
Assume $\char(\FF)\neq 2,7$. Then the defining ideal of the Klein configuration in $\PP_\FF^2$ is minimally generated by three polynomials of degree 8:
\begin{eqnarray*}
f &=& xy(x^2-y^2)(4x^4+4y^4+(3c+9)x^2y^2+(5c-1)x^2z^2+(5c-1)y^2z^2+(15c+25)z^4),\\
g &=& yz(y^2-z^2)((15c+25)x^4+4y^4+(5c-1)x^2y^2+(5c-1)x^2z^2+(3c+9)y^2z^2+4z^4),\\
h &=& zx(z^2-x^2)(4x^4+(15c+25)y^4+(5c-1)x^2y^2+(3c+9)x^2z^2+(5c-1)y^2z^2+4z^4).
\end{eqnarray*}
\end{prop}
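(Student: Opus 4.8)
The statement packages two assertions: that the three octics $f,g,h$ generate the full ideal $I$ of the $49$ Klein points, and that no fewer generators, and none in lower degree, suffice. The plan is to establish the easy containment $(f,g,h)\subseteq I$ by a symmetry-assisted vanishing check, and to obtain the reverse containment together with minimality by pinning down the Hilbert--Burch resolution of $I$.

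For $(f,g,h)\subseteq I$, first I would record that $g$ and $h$ are the images of $f$ under the cyclic substitution $x\mapsto y\mapsto z\mapsto x$, and that the $49$-point set is stable under the group of signed coordinate permutations. Since $f$ is semi-invariant (it changes by at most a sign) under the transposition $(x\,y)$ and under sign changes of the coordinates, its vanishing locus $V(f)$ is $\langle (x\,y),\text{signs}\rangle$-stable; hence it suffices to verify that $f$ vanishes on one representative of each orbit of this subgroup among the $49$ points, and the vanishing of $g,h$ everywhere then follows from the cyclic invariance of the configuration. Points with a zero coordinate or two equal/opposite coordinates are killed immediately by the monomial factor $xy(x^2-y^2)$; the remaining representatives, such as $(c:\pm1:0)$, $(1:\pm1:\pm\bar c)$, $(\bar c^2:\pm1:\pm1)$, must be handled by substituting into the quartic factor and reducing modulo $c^2+c+2=0$ (equivalently $c^2=-c-2$, $c\bar c=2$, $c+\bar c=-1$). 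This turns the containment into a short list of finite identities in $\FF$.

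The reverse containment and minimality are the crux. Since $R/I$ is Cohen--Macaulay of dimension $1$ (reduced points in $\pr2$ are arithmetically Cohen--Macaulay), the Hilbert--Burch Theorem \cite[Theorem 20.15]{refEis} applies, and the multiplicity formula for codimension-two perfect ideals guides the search: an ideal generated by three octics whose Hilbert--Burch matrix has column degrees $\{a,b\}$ with $a+b=8$ has degree $\tfrac12\big((8+a)^2+(8+b)^2-3\cdot 64\big)$, which equals $49$ exactly when $\{a,b\}=\{3,5\}$. I would therefore exhibit an explicit $3\times 2$ matrix $A$ with homogeneous columns of degrees $3$ and $5$ whose three signed maximal minors are $f,g,h$ up to scalars, equivalently produce the two syzygies $P_1f+P_2g+P_3h=0$ and $Q_1f+Q_2g+Q_3h=0$ with $\deg P_i=3$ and $\deg Q_i=5$. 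Given such $A$, the ideal $J=(f,g,h)$ is perfect of codimension $2$, with minimal resolution $0\lra R(-11)\oplus R(-13)\lra R(-8)^3\lra J\lra 0$, hence Cohen--Macaulay, saturated, and of degree $\tfrac12(121+169-192)=49$. Because $J\subseteq I$, the locus $V(J)$ already contains the $49$ distinct Klein points; as $\deg(R/J)=49$ equals their number, $\mathrm{Proj}(R/J)$ is reduced and equals exactly those points, so $J=I$. Minimality is then automatic: the generator degree $8$ is strictly below the syzygy degrees $11,13$, so the displayed resolution is minimal, and $I$ is minimally generated by the three $\FF$-linearly independent octics $f,g,h$.

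The hard part will be producing and verifying the Hilbert--Burch matrix $A$: finding the two syzygies of degrees $3$ and $5$ and confirming that its maximal minors reproduce $f,g,h$ is a nontrivial, if finite, symbolic computation over $\FF$. If one prefers not to write $A$ down, the alternative is to compute the Hilbert function of the $49$ points directly, obtaining $1,3,6,10,15,21,28,36,42,46,48,49,49,\dots$, and to invoke the structure theory of codimension-two Cohen--Macaulay ideals to read off the graded Betti numbers; this still requires the same degree bookkeeping. Finally, the hypotheses $\char(\FF)\neq 2,7$ enter precisely here and in the vanishing check: the discriminant of $t^2+t+2$ is $-7$, so in characteristic $7$ the roots collide ($c=\bar c$) and the configuration degenerates, while in characteristic $2$ the sign patterns $\pm$ collapse and $t^2+t+2=t(t+1)$ splits. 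Both exclusions are needed to ensure that the $49$ points are distinct and that $f,g,h$ remain a minimal generating set of the stated degrees.
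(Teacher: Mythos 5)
Your proposal shares the paper's skeleton---prove $(f,g,h)\subseteq I$ by a symmetry-reduced vanishing check, then show $J=(f,g,h)$ is a height-two Cohen--Macaulay (hence saturated) ideal of multiplicity $49$, and conclude $J=I$ from containment plus equal multiplicity---but the mechanism for the middle step is genuinely different. You propose to produce the entire Hilbert--Burch matrix, i.e.\ both syzygies, of degrees $3$ and $5$, and to verify that its signed maximal minors reproduce $f,g,h$. The paper gets by with half that data: it exhibits only the degree-$3$ syzygy $(zD_3,xD_1,yD_2)$, where the $D_i$ arise from the divisibilities $C_1-C_2=(x^2-y^2)D_3$ etc., proves that these three cubics form a regular sequence (a determinant computation, and this is exactly where characteristics $2$ and $7$ are excluded), and then cites the Eisenbud--Huneke/Tohaneanu theorem \cite{refEH}, \cite{refTo} that an ideal possessing a syzygy given by a regular sequence has Cohen--Macaulay quotient. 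Minimality of that single syzygy plus degree bookkeeping then forces the resolution $0\to R(-11)\oplus R(-13)\to R(-8)^3\to R\to R/J\to 0$ and multiplicity $49$, with no need to ever write the degree-$5$ syzygy (it appears only later, in the proof of Theorem \ref{Klein}). Your route is more self-contained---no appeal to \cite{refEH}/\cite{refTo}---and your observation that multiplicity $49$ forces column degrees $\{3,5\}$ is a nice piece of bookkeeping the paper does not state; the paper's route requires markedly less symbolic work. Also, for the containment step the paper constructs the quartic factor conceptually (the squares of the relevant points lie on a conic), whereas you check vanishing directly; both are fine. Your deferred ``hard part'' is genuinely doable: the two syzygies you need are exactly $(zD_3,xD_1,yD_2)$ and $(zC_2,\,xC_2,\,y((y^2-z^2)D_2+C_3))$ from the paper.

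Two points need repair when you execute the plan. First, producing two syzygies is \emph{not} ``equivalent'' to the minors of $A$ being $f,g,h$ up to scalars, and it does not by itself make $J$ perfect: from $[f\ g\ h]\,A=0$ alone you get nothing in degenerate cases. You need in addition that $A$ has rank $2$ and that $f,g,h$ have no common factor; granted these, the signed minor vector and $[f\ g\ h]$ both span the rank-one kernel of $A^T$, a gcd-plus-degree argument makes them equal up to a nonzero scalar, and only then does Hilbert--Burch deliver the resolution and perfection. Second, your explanation of the excluded characteristic $7$ is not correct: when $c=\bar c=3$ the $49$ points remain distinct, and what fails is the reverse containment. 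As the paper's remark following this proposition explains, in characteristic $7$ the three octics degenerate to $xy(x^6-y^6)$ etc., which vanish on all $57$ points of $\PP^2_{\FF_7}$; the cubic syzygy collapses to a multiple of the linear syzygy $(z,x,y)$, and $(f,g,h)$ is strictly smaller than $I$. In your framework this must surface as the nonexistence of a rank-two syzygy matrix with column degrees $\{3,5\}$ (the Hilbert--Burch degrees become $\{1,7\}$, multiplicity $57$), so the hypothesis on the characteristic has to be consumed inside your symbolic verification, not attributed to a degeneration of the point set. A smaller slip: $(1:\pm1:\pm\bar c)$ is already killed by the factor $x^2-y^2$; the orbit representatives that actually require substitution into the quartic are permutations such as $(1:\pm\bar c:\pm1)$, since your stabilizer subgroup never moves the third coordinate.
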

\begin{proof}
Consider among the points listed in (\ref{klein}) those that do not lie on $V(xy(x+y)(x-y))$. They have the following coordinates
$$
\begin{matrix}
 (1: \pm \bar c: \pm1), &  (\pm \bar c: \pm1: 1) \\
(c:\pm 1:0), & (\pm 1: c: 0) \\
(\pm 1: \bar c ^2:  \pm 1), & (\bar c ^2: \pm 1: \pm 1). \\
\end{matrix}
$$
We'll start by finding a form that vanishes on the squares of the coordinates of these points. Note that after taking squares of the coordinates the points can be grouped into 6 classes:  $$(1: \bar c^2: 1),  (\bar c^2: 1: 1), (c^2: 1:0), (1: c^2: 0), (1: \bar c ^4:  1), (\bar c ^4:  1:  1).$$ 

The remarkable thing about these six points is that they lie on a quadratic curve, namely 
$V(4x^2+4y^2+(3c+9)xy+(5c-1)xz+(5c-1)yz+(15c+25)z^2)$.
A conceptual reason why this is the case is the following: since this set of points is closed under switching $x$ and $y$ coordinates, it suffices to find a non-zero linear combination of $x^2+y^2, xy,(x+y)z$ and $z^2$ that vanishes on the points $(1: \bar c^2: 1), (c^2: 1:0), (1: \bar c ^4:  1)$; this will automatically also vanish on the other three points. Since the points impose 3 independent conditions on the 4 forms $x^2+y^2, xy,(x+y)z$ and $z^2$, it is clear that such a linear combination can be found. Having an explicit description of the coefficients as above helps identify the finite list of characteristics to be excluded from our argument. Specifically, we shall see below that our arguments do not apply for $\char(\mathbb{F})=2$ or $7$.

 It follows from the previous arguments that the form $xy(x+y)(x-y)C_3$ vanishes on all the points of (\ref{klein}), where
  $$C_3=4x^4+4y^4+(3c+9)x^2y^2+(5c-1)x^2z^2+(5c-1)y^2z^2+(15c+25)z^4$$ is obtained by substituting every variable by its square in the quadratic form above.
By symmetry, the forms $yz(y+z)(y-z)C_1$ and $zx(z+x)(z-x)C_2$ also vanish on these points, where $C_1$ is obtained from $C_3$ by substituting $x$ for $z$ and $z$ for $x$ and $C_2$ is obtained from $C_3$ by substituting $y$ for $z$ and $z$ for $y$.

Let $I=(xy(x+y)(x-y)C_3, yz(y+z)(y-z)C_1, zx(z+x)(z-x)C_2)\subset R=\FF[x,y,z]$. We wish to show this is the defining ideal of the Klein configuration. First note that this ideal is contained by construction in the defining ideal of the Klein configuration. It remains to show that $I$ is unmixed of the correct multiplicity. 
As a consequence of the manner in which $C_1$ has been defined, the polynomial $C_3-C_1$ is divisible by $x^2-z^2$ and similarly  $C_3-C_2$ is divisible by $y^2-z^2$ and  $C_1-C_2$ is divisible by $x^2-y^2$. Set
\begin{equation}\label{Ds}
\begin{split}
C_1-C_2 & =  (x^2-y^2)D_3 \\
C_2-C_3 & =  (y^2-z^2)D_1 \\
C_3-C_1 & =  (z^2-x^2)D_2 .
\end{split}
\end{equation}
It is easily checked  using the equations above that $$(zD_3)xy(x+y)(x-y)C_3+(xD_1)yz(y+z)(y-z)C_1+(yD_2) zx(z+x)(z-x)C_2)=0.$$  
thus $zD_3,xD_1,yD_2$ is a syzygy on the generators of $I$. 

Explicitly $D_3=(15c+21)x^2+(15c+21)y^2+(2c-10)z^2$ and the formulas for $D_1,D_2$ are obtained by symmetry. 
A determinant computation shows that $D_1, D_2,D_3$ are linearly independent if  and only if $c$ satisfies one of the equations $c+1=0$ or $13c+31=0$ in addition to $c^2+c+2=0$. The second linear equation is the only one compatible with the quadratic equation and they  can only have a common solution if $\char(\FF)= 2$ or $7$. It is easy to note that, except for these two characteristics mentioned above, the common vanishing locus of the polynomials $zD_3,xD_1,yD_2$ is empty if $2c-10\neq0$ (the equality $2c-10=0$ can only occur together with $c^2+c+2$ in characteristic 2),  thus the ideal $zD_3,xD_1,yD_2$ is primary to the maximal ideal and hence the three polynomials form a regular sequence. Therefore, by  \cite{refEH} or \cite{refTo}, the presence of a syzygy given by a regular sequence implies that  the quotient $R/I$ is Cohen-Macaulay, hence unmixed of Krull dimension one, since the three generators share no common factors. 

To finish the proof one needs to show that the multiplicity of $R/I$ is 49. First we show that the syzygy $zD_3,xD_1,yD_2$ is minimal. Suppose the contrary, then the triple $zD_3,xD_1,yD_2$ would be a multiple of a linear or quadratic minimal syzygy, thus contradicting that $zD_3,xD_1,yD_2$ form a regular sequence. Therefore  we have a minimal free resolution of $R/I$ of the form  
$$0  \to R(-11) \oplus R(-13) \to R(-8)^3 \to R \to R/I \to 0.$$
One easily computes the multiplicity of $R/I$ to be 49 using this exact sequence.
\end{proof}

\begin{rem}
For $\FF=\ZZ/7$ and $c=3$, the generators listed in the previous Proposition become  $xy(x^6-y^6), xz(x^6-z^6), yz(y^6-z^6)$. These three degree 8 equations are satisfied by all 57 points of $\PP_\FF^2$, meaning that the ideal of the Klein configuration in characteristic 7 has additional minimal generators. This is because the syzygy described in the proof above fails to yield a regular sequence in this case. Indeed, with the notation in the proof, one has $D_1=D_2=D_3=x^2+y^2+z^2$, thus the sequence $zD_3,xD_1,yD_2$ is not regular; it is in fact  a multiple of the minimal linear syzygy $z,x,y$.\end{rem}

\begin{thm}\label{Klein}
If $I$ is the defining ideal of a Klein configuration over a field $\FF$ of characteristic not equal to  $2, 3$ or $7$, then $I^{(3)} \not \subseteq I^2$.
\end{thm}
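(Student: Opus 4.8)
The plan is to reduce the statement, via Theorem \ref{main}, to showing that the vector $[f\ g\ h]^T$ does not lie in $\Im(Y^T)$, and then to establish this non-containment by verifying the hypotheses of Proposition \ref{6=0}. By Proposition \ref{gensKlein} (which requires $\char(\FF)\neq 2,7$), the ideal $I$ is a strict almost complete intersection generated by three forms $f,g,h$ of degree $d=8$ over $\FF$, where the scalar $c$ satisfies $c^2+c+2=0$; hence Theorem \ref{main} and Proposition \ref{6=0} apply once we also exclude $\char(\FF)=3$, and these three exclusions are exactly the forbidden characteristics in the statement. The first column of the Hilbert-Burch matrix is already available from the proof of Proposition \ref{gensKlein}: it is the degree-$3$ syzygy $(P_1,P_2,P_3)=(zD_3,\,xD_1,\,yD_2)$, with $D_1,D_2,D_3$ the explicit quadratics appearing there. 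Thus $d_0=3$ and $d_1=5$, consistent with $d=d_0+d_1=8$.

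First I would determine the second, degree-$5$ syzygy $(Q_1,Q_2,Q_3)$ explicitly. Since $f,g,h$ are, up to a common unit, the $2\times 2$ minors $P_2Q_3-P_3Q_2$, $P_3Q_1-P_1Q_3$, $P_1Q_2-P_2Q_1$ of the Hilbert-Burch matrix, the vector $Q$ is determined by $P$ together with the generators, and I would produce it by solving the relation $Q_1f+Q_2g+Q_3h=0$ in degree $5$. Here the cyclic symmetry $\sigma\colon x\mapsto y\mapsto z\mapsto x$ is the organizing tool: one checks $\sigma(C_3)=C_1$, $\sigma(D_3)=D_1$, so that $\sigma$ sends $f\mapsto g\mapsto h\mapsto f$ and $P_1\mapsto P_2\mapsto P_3\mapsto P_1$. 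Consequently a cyclically symmetric choice of $Q$ is available, with $Q_1\mapsto Q_2\mapsto Q_3\mapsto Q_1$, so that only $Q_1$ need be computed and the other two follow by applying $\sigma$.

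With $P$ and $Q$ in hand, the core of the argument is the verification of conditions (1) and (2) of Proposition \ref{6=0}. For condition (1) I would check that the six degree-$8$ forms $P_iQ_j$ with $i\neq j$ are linearly independent in the $45$-dimensional space $R_8$ (a finite rank computation over $\FF$), together with the independence of $\{P_1,P_2,P_3\}$ and of $\{Q_1,Q_2,Q_3\}$, and then extend the six products to a basis $\mathcal{B}$ of $R_8$. For condition (2) I must show that the alternating sum $c(\phi_1,P_2Q_3)-c(\phi_1,P_3Q_2)+c(\phi_2,P_3Q_1)-c(\phi_2,P_1Q_3)+c(\phi_3,P_1Q_2)-c(\phi_3,P_2Q_1)$ vanishes identically in the $w_i$, where $\phi_1,\phi_2,\phi_3$ are the symmetric combinations of $P$ and $Q$ prescribed in Proposition \ref{6=0}.

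The main obstacle is precisely this last verification. In the Fermat case (Proposition \ref{Fermat}) the entries $P_i,Q_j$ are monomials, so each product $P_iQ_j$ is a single monomial, each coefficient $c(\phi_k,P_iQ_j)$ collapses to a sum of just two coefficients of the $w_i$, and the alternating sum telescopes to $0$ purely formally. For the Klein configuration the $P_i,Q_j$ are genuine forms over $\FF$, so this collapse fails: each $c(\phi_k,P_iQ_j)$ is an honest linear functional of the $w_i$, determined by how the products $w\,P_p$ and $w\,Q_q$ decompose in the chosen basis $\mathcal{B}$, and the required cancellation is no longer automatic from condition (1) alone. I expect to establish it by a careful, symmetry-organized computation over $\FF$: reducing modulo $c^2+c+2$, using $\sigma$ to cut the bookkeeping down to a single $\phi_k$ and a single class of $w_i$, and tracking exactly which characteristics cause the independence in (1) or the cancellation in (2) to degenerate. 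This is where $\char(\FF)=2$ and $7$ must be excluded, matching Proposition \ref{gensKlein}, while $\char(\FF)=3$ enters only through the final $6=0$ contradiction internal to Proposition \ref{6=0}. Combining the verified hypotheses of Proposition \ref{6=0} with Theorem \ref{main} then yields $I^{(3)}\not\subseteq I^2$.
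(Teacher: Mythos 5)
Your reduction is exactly the paper's setup: Theorem \ref{main} plus Proposition \ref{6=0}, with $P=(zD_3,\,xD_1,\,yD_2)$ taken from the proof of Proposition \ref{gensKlein} and $(d_0,d_1)=(3,5)$. But the proposal has a genuine gap precisely where the difficulty of the theorem lies: you never establish condition (2) of Proposition \ref{6=0}; you only announce that a ``careful, symmetry-organized computation'' should do it. Note that condition (2) is sensitive to the choice of the basis $\mathcal{B}$ extending the six products $P_iQ_j$: an arbitrary extension gives no reason for the alternating sum to vanish, and you supply no cancellation mechanism. The paper's proof supplies exactly this mechanism through a careful choice of bases. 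It first proves three facts: (a) $R_5=Q_5\oplus P_5$; (b) all \emph{nine} products $P_iQ_j$, $1\leq i,j\leq 3$, are linearly independent; (c) $R_8=\Span\{P_iQ_j\}\oplus (P^2)_8$; and then takes $\mathcal{B}_8$ adapted to (c) and $\mathcal{B}_5$ adapted to (a). With these choices one gets $c(\phi P_k,P_iQ_j)=\delta_{ik}\,c(\phi,Q_j)$ for $\phi\in R_5$, so the contributions of $w_1,\ldots,w_6$ telescope formally exactly as in the Fermat case --- contrary to your assertion that the Fermat-style collapse ``fails'' for Klein, it survives for half of the terms once the basis is adapted to the powers of the ideal $P$. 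Only the contributions of $w_7,\ldots,w_{12}$ (the terms $\phi Q_j$ with $\phi\in R_3$) require genuine computation; the paper carries this out via explicit coefficient tables (computed with Macaulay2, with entries polynomial in $c$ reduced modulo $c^2+c+2$, hence valid in every allowed characteristic) and checks that their alternating sum cancels. Without either the adapted bases or these tables, your argument stops at a statement of intent.

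Two smaller points. First, your claim that a cyclically invariant $Q$ exists is plausible but unproven: the natural action of $\sigma$ on syzygies is the twisted one, $(a_1,a_2,a_3)\mapsto(\sigma(a_3),\sigma(a_1),\sigma(a_2))$; it fixes $P$ and preserves the signed minors, hence sends $Q\mapsto Q+rP$ with $r+\sigma(r)+\sigma^2(r)=0$, and for $\char(\FF)\neq 3$ one can solve $r=s-\sigma(s)$ and replace $Q$ by $Q+sP$. That argument (or something like it) must be given, and it is worth knowing that the paper's explicit choice $Q=\left(zC_2,\ xC_2,\ y((y^2-z^2)D_2+C_3)\right)$ is \emph{not} symmetric, so symmetry is a convenience you would have to engineer, not read off. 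Second, verifying condition (1) by ``a finite rank computation over $\FF$'' is not a proof valid uniformly in all characteristics $\neq 2,3,7$; the paper avoids this with a characteristic-free argument: a linear relation among the $P_iQ_j$ is a syzygy on the regular sequence $P_1,P_2,P_3$, hence Koszul, and since no nonzero linear combination of $Q_1,Q_2,Q_3$ lies in $(P_1,P_2,P_3)$, the relation is trivial.
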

\begin{proof}
The proof of Proposition \ref{gensKlein} establishes that $I$ has is an almost complete intersection ideal uniformly generated in degree 8, with a minimal syzygy of degree $3$ 
\begin{equation} \label{Ps}
P_1=zD_3, P_2=xD_1, P_3=yD_2
\end{equation}
 given by a regular sequence. Thus the other minimal generator $[Q_1, Q_2,Q_3]^T$ for the module of syzygies on $I$ must be of degree 5. Let $P=(P_1,P_2,P_3)$ and $Q=(Q_1, Q_2, Q_3)$.
We now explicitly exhibit a minimal generator of degree 5 for the module of syzygies on $I$:
\begin{eqnarray*}
zC_2xy(x+y)(x-y)C_3+xC_2yz(y+z)(y-z)C_1+y((y^2-z^2)D_2+C_3)zx(z+x)(z-x)C_2 &=& \\
xyz((x^2-y^2)C_2C_3+(y^2-z^2)C_1C_2+(y^2-z^2)(C_3-C_1)C_2+(z^2-x^2)C_2C_3) &=&\\
xyz((x^2-y^2)C_2C_3+(y^2-z^2)C_1C_2+(y^2-z^2)C_2C_3-(y^2-z^2)C_1C_2+(z^2-x^2)C_2C_3) &=& 0.
\end{eqnarray*}
Thus we may set
\begin{equation}\label{Qs}
\begin{split}
Q_1 &= zC_2 \\
Q_2 &= xC_2 \\
Q_3 &= y((y^2-z^2)D_2+C_3)
\end{split}
\end{equation}

We summarize some important properties satisfied by the forms $P_1,P_2,P_3,Q_1,Q_2, Q_3$ that will be used in the proof. 
\begin{enumerate}
\item[{{$($a$)$}}] $Q_1,Q_2, Q_3$ span a complementary space to the homogeneous component of $P$ in degree 5
i.e. $Q_5\oplus P_5=R_5.$
\item[{{$($b$)$}}] the polynomials $\{P_iQ_j | 1\leq i \leq 3, 1\leq j \leq 3\}$ form a linearly independent set
\item[{{$($c$)$}}] the set of polynomials in (b) together with a basis for the homogeneous component of $P^2$ in degree 8 form a basis for $R_8$ i.e. $\Span\{P_iQ_j | 1\leq i \leq 3, 1\leq j \leq 3\}\oplus P^2_{\ 8}=R_8.$
\end{enumerate}

Property $($a$)$ can be verified by showing that no non-zero linear combination of the generators of $Q$ is in $P$. Note that all monomials in $Q_1, Q_2, Q_3$ have odd exponents for the variables $z,x,y$ respectively and even exponents for the other two variables. Since $P_1, P_2, P_3$ exhibit the same pattern, we have that  a linear combination $\sum_{j=1}^3\lambda_jQ_j\in P$ if and only if $\lambda_jQ_j\in P$ by separating the monomials in each of the three categories. It can be verified directly that that $Q_1,Q_2,Q_3 \not \in P$, so the linear combination must be trivial. Since $P$ is generated by a regular sequence it follows that  $\dim_{\mathbb{F}} P_5=18=\dim_{\mathbb{F}} R_5-3$, thus $Q_5$ and $P_5$ are complementary subspaces of $R_5$.
 
 To check that $($b$)$ and hence condition (1)  of Proposition \ref{6=0} holds, we note that  a linear dependence relation between the forms $P_iQ_j$ yields a syzygy on $P_1, P_2, P_3$.  Since the $P_i$ form a regular sequence,  this syzygy must lie in the module of Koszul syzygies on $P$. Since  no linear combination of the $Q_i$ lies in $P$, this means that the linear dependence relation is trivial. Property $($c$)$ follows from $($a$)$ coupled with a similar syzygy argument. 
 
 
 Next we take $\mathcal B_5$ to the the basis for $R_5$ expressed in {{$($a$)$}} and $\mathcal B_8$ to the the basis for $R_8$ expressed in $($c$)$ and we check that condition (2) of Proposition \ref{6=0} is satisfied with respect to $\mathcal B_8$. Towards this end, we need to compute the coefficients of polynomials of the form $\omega P_i$ and $\omega' Q_j$ when written in base $\mathcal B_8$, where $\omega\in R_5$ and $\omega'\in R_3$.

 Notice that $c(\phi P_k,P_iQ_j)=\delta_{ik}c(\phi,Q_j)$ for any $\phi\in R_5$, where $c(\phi P_k,P_iQ_j)$ is the coefficient with respect to the basis $\mathcal B_8$ and $c(\phi,Q_j)$ is the coefficient with respect to the basis $\mathcal B_5$. This is because by $($a$)$ any $\phi\in R_5$ can be written with respect to the base $\mathcal B_5$ as  $\phi=\sum_{j=1}^3 c(\phi,Q_j)Q_j+\phi'$ with $\phi'\in P$, whence the expression $\phi P_k=\sum_{j=1}^3 c(\phi,Q_j)P_kQ_j+\phi'P_k$, in which $\phi'P_k\in P^2$, gives the decomposition of $\phi P_k$ with respect to  the two complementary spaces of $R_8$ described in $($c$)$.
 Thus
\begin{equation}\label{eq7}
\begin{split}
 c(w_1P_1+w_2P_2+w_3P_3,P_2Q_3) = & c(w_2,Q_3) \\
c(w_1P_1+w_2P_2+w_3P_3,P_3Q_2 )= & c(w_3,Q_2) \\
c(w_2P_1+w_4P_2+w_5P_3,P_3Q_1) = & c(w_5,Q_1) \\
c(w_2P_1+w_4P_2+w_5P_3,P_1Q_3) = & c(w_2,Q_3) \\
c(w_3P_1+w_5P_2+w_6P_3,P_1Q_2) = & c(w_3,Q_2)\\
c(w_3P_1+w_5P_2+w_6P_3,P_2Q_1) = & c(w_5,Q_1)
\end{split}
\end{equation}
and the alternating sum of the right hand side terms is 0.

Computing the coefficients of $P_iQ_j$ in expressions of the form $\phi Q_j$ with $\phi \in R_3$ turns out to be somewhat more complicated. Using {\em Macaulay2} one obtains the following data, where the entry in each cell is the coefficient of the  basis element indexing the respective row in the expression of the polynomial corresponding to the column. We have scaled all of these coefficients by dividing them by a factor of $2^77$. The table gives the results after this scaling.
  \medskip
  
 \arraycolsep=2.3pt 
 
$\begin{array}{c|cccccccccc}
& x^3Q_1 & x^3Q_2 & x^3Q_3 & x^2yQ_1 & x^2yQ_2 & x^2yQ_3 & x^2zQ_1 & x^2zQ_2 & x^2zQ_3 & xy^2Q_1 \\ \hline
P_2Q_3&0&
      0&
      2 c+12&
      0&
      15 c-6&
      0&
      0&
      0&
      0&
      0\\
   P_3Q_2 &   0&
      0&
      15 c-6&
      0&
      15 c-6&
      0&
      0&
      0&
      0&
      0\\
   P_3Q_1 &  0&
      0&
      0&
      5 c-2&
      0&
      0&
      0&
      0&
      -8 c+16&
      0\\
  P_1Q_3  &  0&
      0&
      0&
      5 c-2&
      0&
      0&
      0&
      0&
      5 c-2&
      0\\
P_1Q_2  &    15 c-6&
      0&
      0&
      0&
      0&
      0&
      0&
      15 c-6&
      0&
      5 c-2\\
  P_2Q_1&    2 c+12&
      0&
      0&
      0&
      0&
      0&
      0&
      2 c+12&
      0&
      5 c-2\\
   \end{array}$

      \medskip
     
 $\begin{array}{c|cccccccccc}
& xy^2Q_2 & xy^2Q_3 & xyzQ_1 & xyzQ_2 & xyzQ_3 & xz^2Q_1 & xz^2Q_2 & xz^2Q_3 & y^3Q_1 & y^3Q_2 \\ \hline
P_2Q_3 &0&
15 c-6&
      5 c-2&
      0&
      0&
      0&
      0&
      5 c-2&
      0&
      -12 c-72\\
  P_3Q_2 &    0&
      15 c-6&
      5 c-2&
      0&
      0&
      0&
      0&
      -8 c+16&
      0&
      2 c+12\\
  P_3Q_1 &    0&
      0&
      0&
      5 c-2&
      0&
      0&
      0&
      0&
      2 c+12&
      0\\
  P_1Q_3  &   0&
      0&
      0&
      5 c-2&
      0&
      0&
      0&
      0&
      -12 c-72&
      0\\
  P_1Q_2 &    0&
      0&
      0&
      0&
      5 c-2&
      2 c+12&
      0&
      0&
      0&
      0\\
   P_2Q_1 &   0&
      0&
      0&
      0&
      5 c-2&
      15 c-6&
      0&
      0&
      0&
      0\\
      \end{array}$
      
        \medskip
      
$\begin{array}{c|cccccccccc}
& y^3Q_3 & y^2zQ_1 & y^2zQ_2 & y^2zQ_3 & yz^2Q_1 & yz^2Q_2 & yz^2Q_3 & z^3Q_1 & z^3Q_2 & z^3Q_3 \\ \hline
P_2Q_3 &   0&
0&
      0&
      0&
      0&
      5 c-2&
      0&
      0&
      0&
      0\\
   P_3Q_2 &   0&
      0&
      0&
      0&
      0&
      5 c-2&
      0&
      0&
      0&
      0\\
 P_3Q_1 &     0&
      0&
      0&
      15 c-6&
      15 c-6&
      0&
      0&
      0&
      0&
      15 c-6\\
   P_1Q_3 &   0&
      0&
      0&
      15 c-6&
      15 c-6&
      0&
      0&
      0&
      0&
      2 c+12\\
P_1Q_2&      0&
      0&
      5 c-2&
      0&
      0&
      0&
      0&
      0&
      2 c+12&
      0\\
  P_2Q_1 &    0&
      0&
      5 c-2&
      0&
      0&
      0&
      0&
      0&
      15 c-6&
      0\\
      \end{array}$
      
      \medskip
      
Taking alternating sums of pairs of consecutive rows yields the following identities for \\ $\phi_{1,Q}=w_7Q_1+w_8Q_2+w_9Q_3$, $\phi_{2,Q}=w_8Q_1+w_{10}Q_2+w_{11}Q_3$ and $\phi_{3,Q}=w_9Q_1+w_{11}Q_2+w_{12}Q_3$:
\begin{eqnarray*}
 c(\phi_{1,Q},P_2Q_3) -c(\phi_{1,Q} P_3Q_2) &=&\frac{1}{2^77}\left( -(13c-18)c(w_9,x^3)+(13c-18)c(w_9,xz^2) -(14c+84)c(w_9,y^3)\right)\\
c(\phi_{2,Q},P_3Q_1) - c(\phi_{2,Q}P1Q_3) &=&\frac{1}{2^77}\left( -(13c-18)c(w_{11},x^2z)+(14c+84)c(w_9,y^3)+(13c-18)c(w_{11},z^3)\right)\\
c(\phi_{3,Q},P_1Q_2) - c(\phi_{3,Q},P_2Q_1) &=&\frac{1}{2^77}\left( (13c-18)c(w_9,x^3)+(13c-18)c(w_{11},x^2z)-\right.\\
& &\left.(13c+18)c(w_9,xz^2)-(13c-18)c(w_{11},z^3) \right)
\end{eqnarray*}
where the function $c()$ on the right hand side of the equations above signifies coefficient with respect to the standard monomial basis of $R_3$. Taking the sum of the three identities above together with the equations in (\ref{eq7}) verifies condition (2) of Proposition \ref{6=0}, namely that 
$$c(\phi_1,P_2Q_3)-c(\phi_1,P_3Q_2)+c(\phi_2,P_3Q_1)-c(\phi_2,P_1Q_3)+c(\phi_3,P_1Q_2)-c(\phi_3,P_2Q_1)=0.$$
 An application of Proposition \ref{6=0} and Theorem \ref{main} now yields that $I^{(3)}\not\subseteq I^2$ for the ideal $I$ of the Klein configuration of points.
\end{proof}

\begin{rem}The criterion  of Theorem \ref{main} can be successfully applied to the Wiman configuration as well. The details of the argument in that case are however much more complicated than in the case of the Klein configuration, therefore we do not include them here.  
\end{rem}

\medskip

\section*{Acknowledgements} We thank Brian Harbourne, Tomasz Szemberg and Piotr Pokora for bringing the Klein and Wiman configurations to our attention and Uwe Nagel for  discussions regarding section \ref{sect2}. Part of the work contained  in this paper was done while the author was suported by an NSF-AWM grant. Computations with {\em Macaulay2} \cite{M2} were essential for our work. {\em Macaulay2} is freely available at \href{http://www.math.uiuc.edu/Macaulay2/}{{http://www.math.uiuc.edu/Macaulay2/}}.

\medskip

\end{document}